\newtheorem{theorem}{Theorem}[section]
\newtheorem*{theorem*}{Theorem}
\newtheorem{proposition}{Proposition}[section]
\newtheorem{remark}{Remark}[section]
\theoremstyle{definition}
\newtheorem{definition}{Definition}[section]
\newtheorem{example}{Example}[section]
\newtheorem{claim}{Claim}[section]
\numberwithin{equation}{section}
\newcommand{\C}{\mathcal{C}}
\newcommand{\bb}{\mathcal{B}}
\newcommand{\D}{\mathcal{D}}
\newcommand{\8}{\infty}
\newcommand{\h}{\mathcal{H}(\D)}
\newcommand{\cphi}{C_\varphi}
\date{}
\begin{document}

\title{Eigenfunctions of Composition Operators on Bloch-type Spaces}

\author{ Bhupendra Paudyal }

\maketitle

\begin{abstract}

\noindent Suppose $\varphi$ is a holomorphic self map of the unit disk and $\cphi$ is a composition operator with symbol $\varphi$ that fixes the origin and $0<|\varphi'(0)|<1$. This work explores sufficient conditions that ensure all holomorphic solutions of Schr\"oder equation for the composition operator $\cphi$ belong to a Bloch-type space $\bb_\alpha$ for some $\alpha>0$. The results from composition operators have been extended to weighted composition operators in the second part of this work.
 
\end{abstract}
\maketitle
\section{Introduction} 

Let $\D$ be the unit disk of the complex plane $\C$. Suppose that $\h$ denotes space of holomorphic functions defined on the unit disk. Recall that a holomorphic function $f$ on $\D$  said to be in Bloch-type space  $\bb_\alpha$ for some $\alpha>0$ if 
 \[\sup_{z\in\D} (1-|z|^2)^\alpha |f'(z)|<\8.\] 
 
 Under the norm \begin{equation}\label{1.04}
\|f\|_{\bb_\alpha}=|f(0)|+\sup_{z\in\D} (1-|z|^2)^\alpha |f'(z)|,\end{equation} $\bb_\alpha$ becomes a Banach space. 
From the definition of Bloch-type spaces, it immediately  follows that $\bb_\alpha\subset \bb_\beta$ for $\alpha \leq \beta$ and  $\bb_\alpha \subset H^\8$ for $\alpha <1$.
 
Functions in the Bloch space have been studied extensively by many authors, see \cite{Ar} and \cite{Zh9}. It has been shown in \cite{Zh9} that the Bloch-type norm for $\alpha>1$ is equivalent to the $\alpha-1$ Lipschitz-type norm: 
 \begin{equation}
\label{PRD1}\|f\|_{\bb_\alpha}\approx \sup_{z\in\D} (1-|z|^2)^{\alpha-1} |f(z)|,\hspace{1cm} f\in\bb_\alpha, ~\alpha>1.
\end{equation}

Composing functions $f$ in $\h$, with any holomorphic self-map $\varphi$ of $\D$, induces a linear transformation, denoted by $\cphi$ and called a {\it composition operator} on $\h$:
 \[\cphi f=f\circ \varphi .\]  
 For any $u\in\h$ we define {\it weighted composition operator} $u\cphi$ on $\h$ as  
 \[ u\cphi(f)=(u)(f\circ \varphi).\] 
In this work, we study holomorphic solutions $f$ of the Schr\"oder's equation  
\begin{equation}
\label{Sceq}
(\cphi)f(z) =\lambda f(z),
\end{equation}
and of the weighted Schr\"oder's equation 
 \begin{equation} \label{eqshw}
u\cphi f=\lambda f,
 \end{equation} 
 where $\lambda$ is a complex constant. Assuming $\varphi$ fixes the origin and $0<|\varphi'(0)|<1$, K\"onigs in \cite{ko84} showed that the set of all holomorphic solutions of Eq. \eqref{Sceq} (eigenfunctions of $\cphi$ acting on $\h$) is exactly $\displaystyle\{\sigma ^n\}_{n=0}^\8$, where $\sigma$, principal eigenfunction of $\cphi$, is called {\it K\"onigs function of $\varphi$}. Following the K\"onigs's work, Hosokawa and Nguyen in \cite{Ho10} showed that the set of all eigenfunctions of $u\cphi$ acting on $\h$ is exactly $\{v\sigma^n\}_{n=0}^\infty$ where $v$ is principal eigenfunction of $u\cphi$ and $\sigma$ is the K\"onigs function. \\

According to a general result of Hammond in \cite{ha03} if $u\cphi$ is compact on any Banach space of holomorphic functions on $\D$ containing the polynomials, all eigenfunctions $v\sigma^n$ belong to the Banach space. Hosokawa and Nguyen in \cite{Ho10} under somewhat strong
restrictions on the growths of $u$ and $\varphi$ near the boundary of the unit disk showed that all the eigenfunctions $v\sigma^n$ are eigenfunctions of $u\cphi$ acting on the Bloch space $\bb$. Our goal in this work is to study conditions under which all eigenfunctions $v\sigma^n$ belong to a Bloch-type space $\bb_\alpha$. \\
 
The basic organization of this paper is as follows. We present results concerning to composition operators in Section \ref{compopt}. Theorem \ref{co15} provides the sufficient conditions that ensure all the eigenfunctions $\sigma^n$ belong to Bloch type spaces $\bb_\alpha$ for $\alpha<1$. Similar results for $\alpha=1$ and $\alpha>1$ are presented by Theorem \ref{thm2222} and \ref{thm3.3} respectively. Towards the end of this work we prove results concerning to weighted composition operators.

\section{Preliminaries} \label{prelim}

     We recall the following criteria from \cite[Theorem 2.1]{ZH03} for boundedness of $u\cphi$ on Bloch-type spaces $\bb_\alpha$.
   \begin{theorem} \label{Zh21}
  Let $u$ be analytic on $\D$, $\varphi$ an analytic self-map of $\D$ and $\alpha$ be a positive real number.
\begin{enumerate}
\item If $0<\alpha<1$, then $u\cphi$ is bounded on $\bb_\alpha$  if and only if $u\in\bb_\alpha$ and \[\sup_{z\in \D} |u(z)|\frac{(1-|z|^2)^\alpha}{(1-|\varphi(z)|^2)^\alpha}|\varphi'(z)|<\8.\]

\item The operator $u\cphi$ is bounded on  $\bb$ if and only if
\begin{enumerate}
\item $\sup_{z\in\D}|u'(z)| (1-|z|^2) \log \frac{1}{1-|\varphi(z)|^2}<\8$
\item $\sup_{z\in \D} |u(z)|\frac{(1-|z|^2)}{(1-|\varphi(z)|^2)}|\varphi'(z)|<\8.$
\end{enumerate} 
 
 \item If $\alpha>1$, $u\cphi$ is bounded on $\bb_\alpha$ if and only if the following are satisfied.
 \begin{enumerate}
 \item $\sup_{z\in \D} |u'(z)|\frac{(1-|z|^2)^\alpha}{(1-|\varphi(z)|^2)^{\alpha-1}}<\8$
 \item $\sup_{z\in \D} |u(z)|\frac{(1-|z|^2)^\alpha}{(1-|\varphi(z)|^2)^{\alpha}}|\varphi'(z)|<\8.$
 \end{enumerate}
 
\end{enumerate}

   \end{theorem}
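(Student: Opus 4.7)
My approach is to reduce boundedness of $u\cphi$ on $\bb_\alpha$ to pointwise derivative estimates via the product rule
\[
(u\cphi f)'(z) = u'(z)\, f(\varphi(z)) + u(z)\, f'(\varphi(z))\, \varphi'(z),
\]
and then to control each summand by $\|f\|_{\bb_\alpha}$ together with the known growth of functions in $\bb_\alpha$. The second summand is immediately dominated by $\|f\|_{\bb_\alpha}\,|u(z)|\,\dfrac{(1-|z|^2)^\alpha}{(1-|\varphi(z)|^2)^\alpha}\,|\varphi'(z)|$, which reproduces the second supremum condition in each of the three cases. The first summand requires a pointwise bound on $|f(\varphi(z))|$, and this is where the three ranges of $\alpha$ split.

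For $0<\alpha<1$, the inclusion $\bb_\alpha\subset H^\8$ noted in the introduction gives $|f(\varphi(z))|\leq C\|f\|_{\bb_\alpha}$, so the first summand reduces to estimating $(1-|z|^2)^\alpha |u'(z)|$, which is finite exactly when $u\in\bb_\alpha$. For $\alpha=1$, the standard Bloch growth $|f(w)|\lesssim \|f\|_{\bb}\log\frac{1}{1-|w|^2}$ produces the logarithmic factor of condition (2a). For $\alpha>1$, the equivalence \eqref{PRD1} yields $|f(w)|\lesssim \|f\|_{\bb_\alpha}(1-|w|^2)^{1-\alpha}$, producing condition (3a). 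Combining these bounds with the estimate on the second summand, and handling the $|(u\cphi f)(0)|$ piece of the norm separately via $|u(0)||f(\varphi(0))|$, establishes sufficiency in all three cases.

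For necessity, I would probe $u\cphi$ with test families tailored to each range of $\alpha$. The constant function $f\equiv 1$ yields $u\cphi f = u$, forcing $u\in\bb_\alpha$ and controlling the part of each estimate coming from $u$ itself. The function $f(w)=w$, combined with the information already gained on $u$, extracts the second supremum condition by localising at points where $|\varphi(z)|$ is close to $1$. To capture the suprema involving $u'$ in cases (2) and (3), one uses normalised reproducing-kernel-type test functions concentrated at a chosen point $a\in\D$: variants of $\log\frac{1}{1-\bar a w}$ for $\alpha=1$ and of $(1-\bar a w)^{1-\alpha}$ for $\alpha>1$, each having uniformly bounded $\bb_\alpha$-norm and attaining the extremal growth at $a$. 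Evaluating $u\cphi f_a$ at a point $z$ with $\varphi(z)=a$ then recovers the desired supremum.

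The chief obstacle is the potential cancellation between the two summands in $(u\cphi f)'(z)$: $u'(z) f(\varphi(z))$ and $u(z) f'(\varphi(z))\varphi'(z)$ could in principle offset each other for a given test function, so one cannot always read off both conditions from a single evaluation. The standard remedy is to employ a second test function (for instance $w f_a(w)$) and form linear combinations designed to annihilate one of $f(\varphi(z))$ or $f'(\varphi(z))$ at the chosen point, thereby decoupling the two suprema and isolating each one on its own.
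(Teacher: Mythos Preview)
The paper does not actually prove this theorem: it is quoted in the Preliminaries section as a known result, with the explicit attribution ``We recall the following criteria from \cite[Theorem 2.1]{ZH03}.'' There is therefore no in-paper proof to compare against.

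That said, your outline is correct and is essentially the argument of Ohno--Stroethoff--Zhao in \cite{ZH03}. The product-rule decomposition of $(u\cphi f)'$, the three-way split according to the growth of $|f(w)|$ for $f\in\bb_\alpha$ (bounded for $\alpha<1$, logarithmic for $\alpha=1$, $(1-|w|^2)^{1-\alpha}$ for $\alpha>1$), and the use of test functions of the type $\log\frac{1}{1-\bar a w}$ and $(1-\bar a w)^{1-\alpha}$ together with companion functions to separate the two summands are exactly the ingredients of the original proof. Your remark about the ``chief obstacle'' --- cancellation between the two summands --- is also handled in \cite{ZH03} precisely by pairing each $f_a$ with $w f_a(w)$ (or a suitable variant) so as to isolate each supremum; so your proposed remedy is on target.
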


The following theorem from \cite[Theorem 3.1]{ZH03} provides the compactness criterion for $u\cphi$ acting on $\bb_\alpha$.

  \begin{theorem} \label{COMPACTNESS1}
  Let $u$ be holomorphic function on $\D$ and let $\varphi$ be holomorphic self map of $\D$. Let $\alpha$ be a positive real number, and $u\cphi$ is bounded on $\bb_\alpha$.
  \begin{enumerate}
  \item If $0<\alpha<1$ then $u\cphi$ is compact on $\bb_\alpha$ if and only if \[\lim_{ |\varphi(z)|\rightarrow 1^-} |u(z)|\frac{(1-|z|^2)^\alpha}{(1-|\varphi(z)|^2)^\alpha}|\varphi'(z)|=0.\]
  \item The operator $u\cphi$ is compact on $\bb$ if and only if the following are satisfied. 
  \begin{enumerate}
  \item $\lim_{ |\varphi(z)|\rightarrow 1^-}|u'(z)|(1-|z|^2) \log\frac{1}{(1-|\varphi(z)|^2)}=0$
  \item $\lim_{ |\varphi(z)|\rightarrow 1^-} |u(z)|\frac{(1-|z|^2) }{(1-|\varphi(z)|^2)} |\varphi'(z)|=0.$
  \end{enumerate}
  \item If $\alpha>1$, then $u\cphi$ is compact on $\bb_\alpha$ if and only if the following are satisfied.
  \begin{enumerate}
  \item $\lim_{ |\varphi(z)|\rightarrow 1^-} |u'(z)|\frac{(1-|z|^2)^\alpha }{(1-|\varphi(z)|^2)^{\alpha-1}} =0$
  \item 
  $\lim_{ |\varphi(z)|\rightarrow 1^-} |u(z)|\frac{(1-|z|^2)^\alpha }{(1-|\varphi(z)|^2)^\alpha} |\varphi'(z)|=0.$
  \end{enumerate}
\end{enumerate}   
\end{theorem}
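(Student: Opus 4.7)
The plan is to use the standard sequential characterization of compactness for operators on Banach spaces of holomorphic functions: a bounded operator $T$ on $\bb_\alpha$ is compact if and only if, for every bounded sequence $(f_n)\subset\bb_\alpha$ with $f_n\to 0$ uniformly on compact subsets of $\D$, one has $\|Tf_n\|_{\bb_\alpha}\to 0$. Applied to $T=u\cphi$, the product rule gives
\[
(u\cphi f)'(z)=u'(z)\,f(\varphi(z))+u(z)\,\varphi'(z)\,f'(\varphi(z)),
\]
so the Bloch-type seminorm naturally splits into
\[
I_1(z)=(1-|z|^2)^\alpha|u'(z)||f(\varphi(z))|,\qquad I_2(z)=(1-|z|^2)^\alpha|u(z)\varphi'(z)||f'(\varphi(z))|.
\]
Since $u\cphi$ is assumed bounded, the suprema of both $I_1$ and $I_2$ are finite for each $f\in\bb_\alpha$.

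For the sufficiency direction in each of the three cases, I would fix $\epsilon>0$ and use the appropriate limit hypotheses to choose $r\in(0,1)$ so that, uniformly in $f$ in a bounded subset of $\bb_\alpha$, the contributions to $I_1$ and $I_2$ from the region $\{|\varphi(z)|>r\}$ are at most $\epsilon$. On the complementary region $\{|\varphi(z)|\le r\}$, uniform convergence of $f_n$ (and its derivative) to zero on the compact disk $\{|w|\le r\}$, together with the boundedness coming from Theorem \ref{Zh21}, forces the remaining contribution to $0$. Here the case $0<\alpha<1$ is the cleanest because $\bb_\alpha\subset H^\infty$ automatically kills the $I_1$ term up to a bounded factor, while for $\alpha=1$ and $\alpha>1$ one needs the growth estimates $|f(w)|\lesssim\|f\|_{\bb_\alpha}\log\frac{1}{1-|w|^2}$ and $|f(w)|\lesssim\|f\|_{\bb_\alpha}(1-|w|^2)^{1-\alpha}$, respectively, to pass from an $|f(\varphi(z))|$ bound to the stated condition on $u'$.

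For the necessity direction, the heart of the argument is constructing, for each sequence $(z_n)\subset\D$ with $|\varphi(z_n)|\to 1$, test functions in $\bb_\alpha$ that converge to zero uniformly on compacta but whose images under $u\cphi$ witness that the displayed quantities tend to $0$. Writing $w_n=\varphi(z_n)$, natural candidates are built from reproducing-kernel-type functions $k_{w_n,\beta}(z)=(1-|w_n|^2)^{\beta-\alpha}/(1-\overline{w_n}z)^\beta$ for appropriate exponents $\beta$: one exponent makes $f(w_n)$ large while $f'(w_n)$ is comparatively small, and another exponent does the opposite. Suitable linear combinations then isolate the $I_1$ estimate from the $I_2$ estimate, yielding the two conditions in (2) and (3) separately.

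The main obstacle is precisely this separation in the Bloch case $\alpha=1$: the derivative bound for $u'$ carries a $\log\tfrac{1}{1-|\varphi(z)|^2}$ factor rather than a power, so one needs a test function whose value at $w_n$ grows logarithmically while its derivative stays controlled, and a second one dual to it. A workable choice is a combination of $\log\tfrac{1}{1-\overline{w_n}z}$ and $\bigl(\log\tfrac{1}{1-\overline{w_n}z}\bigr)^2$, normalized so that they are bounded in $\bb$ and vanish uniformly on compacta; verifying these norm bounds and that the two resulting estimates decouple condition (2)(a) from condition (2)(b) is the delicate computational step. Once this is done, the $\alpha<1$ and $\alpha>1$ cases follow by the same template with power test functions replacing the logarithmic ones.
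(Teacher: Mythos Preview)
The paper does not prove this theorem at all: it is quoted in the Preliminaries section as \cite[Theorem~3.1]{ZH03} (Ohno, Stroethoff and Zhao), and is used only as background. There is therefore no proof in the paper to compare your proposal against.

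That said, your outline is essentially the standard argument from the cited source: the sequential compactness criterion, the product-rule splitting into the $u'$-term and the $u\varphi'$-term, growth bounds on $|f|$ in the three regimes, and kernel-type test functions (powers of $(1-\overline{w_n}z)^{-1}$ for $\alpha\neq 1$, logarithms for $\alpha=1$) to separate the two conditions in the necessity direction. If you want to actually carry this out, the place that needs the most care is exactly where you flagged it, the $\alpha=1$ necessity step; the rest is routine once the test functions are written down and their $\bb_\alpha$-norms are checked.
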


\begin{remark} If we assume $u\equiv 1$ in Theorem  \ref{Zh21} and Theorem \ref{COMPACTNESS1}, they provide the criterion for boundness and compactness of composition operators $\cphi$ acting on Bloch-type spaces $\bb_\alpha$.
  
\end{remark}  

The following two theorems are fundamental for our work. Theorem \ref{KOTH} is the famous K\"onigs's Theorem about the solutions to Schr\"oder's equations (see \cite{ko84} and \cite[Chapter 6]{Sh93}).

\begin{theorem}[K\"onigs's Theorem  (1884)]
\label{KOTH}   Assume $\varphi$ is a holomorphic self map of $\D$ such that $\varphi(0) = 0$ and $0 < |\varphi '(0)| < 1$. Then the following assertions hold.
\renewcommand\theenumi{\roman{enumi}}
\begin{enumerate}
\item The sequence of functions
\[\sigma_k(z) := \frac{\varphi_k(z)}{\varphi '(0)^k},\] where $\varphi_k$ is the $k^{th}$ iterates of $\varphi$,
converges uniformly on a compact subset of $\D$ to a non-constant 
function $\sigma$ that satisfies \eqref{Sceq} with $\lambda =\varphi'(0)$.
\smallskip
\item  $f$  and $\lambda$ satisfy \eqref{Sceq}
if and only if there is a positive integer $ n $ such that $\lambda = {\varphi '(0)}^n$ and $f$ is a
constant multiple of $ \sigma^n $. 
\end{enumerate}
\end{theorem}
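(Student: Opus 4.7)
\medskip
\noindent\textbf{Proof plan.}

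My strategy for part (i) is a two-stage argument: first establish uniform convergence of $\sigma_k$ in a small disk around the origin, and then propagate convergence to all of $\D$ using the (almost-)functional equation $\sigma_k\circ\varphi=\varphi'(0)\,\sigma_{k+1}$. Set $\lambda=\varphi'(0)$ and write $\varphi(z)=\lambda z+O(z^2)$, so that for some constants $M>0$ and $r_0\in(0,1)$ we have $|\varphi(z)-\lambda z|\le M|z|^2$ and $|\varphi(z)|\le\rho|z|$ for $|z|\le r_0$, where $\rho$ is chosen with $|\lambda|<\rho<1$ and in fact $\rho^2<|\lambda|$ (this is possible because $|\lambda|<1$ forces $|\lambda|^2<|\lambda|$, and we shrink $r_0$ to bring $\rho$ as close to $|\lambda|$ as needed). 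Iterating gives $|\varphi_k(z)|\le\rho^k|z|$ on this disk, and the telescoping estimate
\[
|\sigma_{k+1}(z)-\sigma_k(z)|
=\frac{|\varphi(\varphi_k(z))-\lambda\varphi_k(z)|}{|\lambda|^{k+1}}
\le \frac{M\,|\varphi_k(z)|^2}{|\lambda|^{k+1}}
\le \frac{M|z|^2}{|\lambda|}\left(\frac{\rho^2}{|\lambda|}\right)^{k}
\]
is summable, so $\{\sigma_k\}$ converges uniformly on $|z|\le r_0$ to a holomorphic $\sigma$. Since $\sigma_k'(0)=\varphi_k'(0)/\lambda^k=1$ for all $k$, we get $\sigma'(0)=1$, in particular $\sigma$ is non-constant.

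To extend to the whole disk, I would use that $\sigma_k(\varphi(z))=\lambda\,\sigma_{k+1}(z)$ for each $k$, so the limit (where defined) satisfies $\sigma\circ\varphi=\lambda\sigma$. Given any compact $K\subset\D$, the Schwarz lemma (applied to $\varphi$, which fixes $0$) shows $\varphi_n(K)$ is contained in the small disk $|z|\le r_0$ for all sufficiently large $n$; the identity $\sigma(z)=\lambda^{-n}\sigma(\varphi_n(z))$ then defines $\sigma$ on $K$ as a uniform limit of holomorphic functions, yielding a globally defined holomorphic $\sigma$ on $\D$ satisfying the Schr\"oder equation with eigenvalue $\lambda$.

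For part (ii), suppose $f\in\h$, $f\not\equiv 0$, satisfies $f\circ\varphi=\lambda f$. Evaluating at $0$ gives $f(0)=\lambda f(0)$; if $f(0)\ne 0$ then $\lambda=1=\varphi'(0)^0$ and, by iterating, $f(z)=f(\varphi_k(z))\to f(0)$, so $f$ is the constant $f(0)=f(0)\cdot\sigma^0$. Otherwise let $n\ge 1$ be the order of vanishing of $f$ at $0$ and write $f(z)=c z^n+O(z^{n+1})$ with $c\ne 0$. Expanding $f(\varphi(z))=\lambda f(z)$ to order $n$ at the origin yields $c\,\varphi'(0)^n=\lambda c$, hence $\lambda=\varphi'(0)^n$. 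To conclude $f$ is a multiple of $\sigma^n$, I would argue by eliminating the top term: choose $c'$ so that $f-c'\sigma^n$ vanishes to order strictly greater than $n$ at $0$; this difference still satisfies the Schr\"oder equation with the same eigenvalue $\lambda=\varphi'(0)^n$, but the previous computation applied to the difference would force $\lambda=\varphi'(0)^m$ for some $m>n$. Since $0<|\varphi'(0)|<1$, the powers $\varphi'(0)^m$ are pairwise distinct, so the only way out is $f-c'\sigma^n\equiv 0$.

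The step I expect to be the main obstacle is the initial convergence estimate near $0$: getting the right quantitative contraction $|\varphi(z)|\le\rho|z|$ with $\rho^2/|\lambda|<1$ requires choosing $r_0$ small enough to make the quadratic remainder in $\varphi(z)-\lambda z$ genuinely negligible compared to $|\lambda|$, and is the one place where the hypothesis $|\varphi'(0)|>0$ (not just $|\varphi'(0)|<1$) is essential. The rest of the argument is either Schwarz lemma bookkeeping or formal manipulation of the functional equation.
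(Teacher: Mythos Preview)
The paper does not prove this theorem at all: it is stated in the Preliminaries section as a classical result and attributed to K\"onigs \cite{ko84} and Shapiro \cite[Chapter~6]{Sh93}, with no argument given. So there is no ``paper's own proof'' to compare against.

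That said, your plan is the standard proof (essentially the one in Shapiro's book) and is correct. A few small remarks. In part~(i), the contraction step ``$\varphi_n(K)$ eventually lands in the small disk'' needs slightly more than the bare Schwarz inequality $|\varphi(z)|\le|z|$; you should note that since $|\varphi'(0)|<1$ the map $\varphi$ is not a rotation, so $|\varphi(z)|<|z|$ for $z\ne 0$, and a compactness argument then gives a uniform ratio $<1$ on any closed subdisk. In part~(ii), your treatment of the case $f(0)\ne 0$ yields $\lambda=1=\varphi'(0)^0$ and $f$ constant; this is correct and matches how the paper uses the result (the introduction speaks of $\{\sigma^n\}_{n=0}^{\infty}$), even though the theorem as stated in the paper says ``positive integer~$n$.'' Your uniqueness argument via order of vanishing, using that the powers $\varphi'(0)^m$ are pairwise distinct when $0<|\varphi'(0)|<1$, is exactly the right idea.
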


 The following theorem characterizes all eigenfunctions of a weighted composition operator under some restriction on the symbol (see \cite{Ho10}).
 
  \begin{theorem} 
\label{th22}
Assume $\varphi$ is a holomorphic self map of $\D$ and $u$ is a holomorphic map of $\D$ such that $u(0)\neq 0, \varphi(0)=0, 0<|\varphi'(0)|<1$. Then, the following statements hold.
\renewcommand\theenumi{\roman{enumi}}
\begin{enumerate}
\item The sequence of functions
\[v_k(z)= \frac{u(z)u(\varphi(z))...u(\varphi_{k-1}(z))}{u(0)^k}\]
where $\varphi_k$ is the $k^{th}$ iterates of $\varphi$,
converges to a non-constant holomorphic function $v$ of $\D$ that satisfies \eqref{eqshw} with $\lambda =u(0)$.
\item  $ f$ and $\lambda$ satisfy 
\eqref{eqshw} if and only if 
$ f = v \sigma^n$, $\lambda = u(0)\varphi'(0)^n$, where $n$ is a non-negative integer and $\sigma$ is the solution
of the Schr\"oder equation \eqref{Sceq} $\sigma\circ \varphi=\varphi'(0)\sigma$.
\end{enumerate}
\end{theorem}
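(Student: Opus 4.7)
The plan is to prove (i) by establishing locally uniform convergence of the product $v_k$ and verifying the functional equation it satisfies, and to prove (ii) by reducing an arbitrary eigenfunction to a constant via division by $v\sigma^n$ at the origin.

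For (i), I would start from K\"onigs's theorem (Theorem \ref{KOTH}): the identity $\varphi_j(z) = \varphi'(0)^j \sigma_j(z)$ with $\sigma_j \to \sigma$ uniformly on compacts gives an estimate $|\varphi_j(z)| \leq C_K |\varphi'(0)|^j$ on each compact $K \subset \D$. Since $u$ is holomorphic with $u(0) \neq 0$, one has $u(w)/u(0) = 1 + O(w)$ near the origin, so for $j$ sufficiently large,
\[
\left| \frac{u(\varphi_j(z))}{u(0)} - 1 \right| \leq M_K |\varphi_j(z)| \leq M_K C_K |\varphi'(0)|^j \qquad (z \in K).
\]
Summability of this upper bound shows the infinite product $v(z) = \prod_{j=0}^{\infty} u(\varphi_j(z))/u(0)$ converges locally uniformly on $\D$ to a holomorphic function with $v(0) = 1$. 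The identity $v(\varphi(z)) = \prod_{j=0}^{\infty} u(\varphi_{j+1}(z))/u(0)$ then telescopes against the $j = 0$ factor in the product defining $v(z)$ to yield $u(z) v(\varphi(z)) = u(0) v(z)$, which is \eqref{eqshw} with $\lambda = u(0)$; and $v$ is nonconstant as soon as $u$ is.

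For (ii), the backward implication is the direct calculation
\[
u(z)\,(v\sigma^n)(\varphi(z)) = \bigl(u(z) v(\varphi(z))\bigr)\,\sigma(\varphi(z))^n = u(0)\varphi'(0)^n\,v(z)\sigma(z)^n,
\]
using (i) and $\sigma \circ \varphi = \varphi'(0)\sigma$. For the forward implication, suppose $u\cphi f = \lambda f$ with $f$ a nonzero holomorphic solution. Let $n$ be the order of vanishing of $f$ at $0$ and write $f(z) = z^n h(z)$ with $h(0) \neq 0$. Dividing the eigenequation by $z^n$ and evaluating at $z = 0$ pins down $\lambda = u(0)\varphi'(0)^n$. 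Next I would form $g = f/(v\sigma^n)$: because $v(0) = 1$ and $\sigma(z) = z + O(z^2)$, both numerator and denominator vanish to exactly order $n$ at $0$, so $g$ extends holomorphically to a neighborhood of the origin with $g(0) = h(0)$. Combining the three eigenequations for $f$, $v$, and $\sigma$ gives $g \circ \varphi = g$ on that neighborhood, and iterating with $\varphi_k(z) \to 0$ forces $g \equiv h(0)$ there. Analytic continuation of the identity $f = h(0)\,v\sigma^n$ from a neighborhood of $0$ to all of $\D$ completes the argument.

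The main difficulty I anticipate is that $v$ and $\sigma$ can both vanish inside $\D$, so the quotient $g$ need not extend to a globally holomorphic function. The resolution is to avoid asking for that: one only needs $g$ as a holomorphic germ at the origin, where the orders of the zeros of numerator and denominator match exactly, and then to upgrade the local identity $f = h(0)\,v\sigma^n$ to a global one by analytic continuation rather than by globalizing $g$ itself.
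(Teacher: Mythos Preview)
The paper does not supply its own proof of this theorem: it is quoted in the preliminaries as a result of Hosokawa and Nguyen \cite{Ho10}, so there is no in-paper argument to compare against. Judged on its own merits, your proposal is correct and follows the natural line one would expect, mirroring the classical proof of K\"onigs's theorem for part (i) and the ``divide and iterate'' reduction for part (ii).

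A couple of small remarks. In (i), your observation that $v$ is nonconstant precisely when $u$ is nonconstant is the right way to read the statement; the functional equation $u(z)v(\varphi(z))=u(0)v(z)$ forces $u\equiv u(0)$ if $v$ is constant, and conversely $u\equiv u(0)$ gives $v\equiv 1$. In (ii), your handling of the possible interior zeros of $v$ and $\sigma$ is exactly the correct resolution: one only needs the quotient $g=f/(v\sigma^n)$ as a holomorphic germ at the origin, where $v(0)=1$ and $\sigma(z)=z+O(z^2)$ guarantee the orders match, and then the identity theorem upgrades $f=h(0)\,v\sigma^n$ from a neighborhood of $0$ to all of $\D$. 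The step ``$g\circ\varphi=g$ near $0$ forces $g$ constant'' is justified because $\varphi_k(z)\to 0$ locally uniformly (a consequence of the Schwarz lemma, or of K\"onigs's theorem), so $g(z)=g(\varphi_k(z))\to g(0)$.
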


\section{Composition operators}\label{compopt}

In this section, we investigate sufficient conditions that ensure the eigenfunctions  $\sigma^n$ of a composition operator belong to $\bb_\alpha$ for some positive number $\alpha$ and for all positive integers $n$.

\begin{definition}
Let us define the {\it Hyperbolic $\alpha$-derivative} of $\varphi$ at $z\in \D$ by
\[\varphi^{(h_\alpha)}(z)= \displaystyle{\frac{(1-|z|^2)^\alpha~\varphi'(z)}{(1-|\varphi(z)|^2)^\alpha}}. \]
\end{definition}
When $\alpha=1 $ then it is simply called the Hyperbolic derivative of $\varphi$ at $z$ and denoted by $\varphi^{(h)}(z)$.
\begin{definition}
 Suppose $\varphi$ is a holomorphic self map of $\D$, $\varphi(0)=0$, $0<|\varphi'(0)|<1$ and $\varphi_m$ is the $m^{th}$ iteration of $\varphi$ for some fixed non-negative integer $m$. Then we say $\varphi$ satisfies condition \eqref{A} if there exists a non-negative integer $m$ such that
\begin{equation}{\nonumber}
\tag{A}
 |\varphi^{(h_\alpha)}(\varphi_{m}(z))|=\displaystyle{\dfrac{(1-|\varphi_m(z)|^2)^\alpha~|\varphi'(\varphi_m(z))|}{(1-|\varphi_{m+1}(z)|^2)^\alpha}}\leq |\varphi'(0)|,
\label{A}
\end{equation}
~ for all~ $z\in \D$~\text{and~for~ some~ fixed~} $\alpha>0$.

\end{definition} 

\begin{remark}
If $\varphi$ satisfies the condition \eqref{A} for some $m$ then it satisfies the condition for all non-negative integers greater than $m$.
\end{remark}

The following example provides a family of maps that satisfies condition \eqref{A}. The example is extracted from \cite{Ho06}.
\begin{example} \label{Ex1}
Consider a map $\gamma$ that maps the  unit disk univalently to the right half plane. This map is given by\[\displaystyle{\gamma(z)=\frac{1+z}{1-z}}.\] For any $t\in(0,1)$, define \[\displaystyle{\varphi_t(z)=\frac{\gamma(z)^t-1}{\gamma(z)^t+1}}.\]  It is well known that $\varphi_t$ maps the unit disk into the unit disk for each $t\in(0,1)$, see \cite{Sh93} . These maps are known as {\it lens map}.\\

\begin{tabular}{cc}
\begin{tikzpicture}[scale=.6]
\draw[-] (0,-3) -- (0,3)node[left]{\footnotesize Im};
\draw[-] (-3,0) -- (3,0)node[below]{\footnotesize Re};
\draw [ draw=gray, fill=gray, opacity=0.2,domain=0:360,samples=500] plot ({2.5*cos(\x)}, {2.5*sin(\x)});
\draw [thick,->] (5, 0)--(8,0) node[above,midway]{$\gamma$ };
\draw [thick,->] (0, -5)--(0,-8) node[right,midway]{$\varphi_t=\gamma^{-1}\circ \gamma^t\circ \gamma$ };
\end{tikzpicture} 
&
\begin{tikzpicture}[scale=.6]
\fill [ draw=gray, fill=gray, opacity=0.2] (0,-3) rectangle (3,3);
\draw[-] (0,-3) -- (0,3)node[left]{\footnotesize Im};
\draw[-] (-3,0) -- (3,0)node[below]{\footnotesize Re};
\draw [thick,->] (0, -5)--(0,-8) node[right,midway]{$\gamma^{t}$ };
\end{tikzpicture}
\end{tabular}

\begin{tabular}{cc}
\begin{tikzpicture}[scale=.6]
\draw[-] (0,-3) -- (0,3)node[left]{\footnotesize Im};
\draw[-] (-3,0) -- (3,0)node[below]{\footnotesize Re};
\draw [domain=0:360,samples=500] plot ({2.5*cos(\x)}, {2.5*sin(\x)});
\draw [ draw=gray, fill=gray, opacity=0.2, domain=0:360,samples=500] plot ({2.5*cos(\x)}, {1.25*sin(\x)});
\draw [thick,<-] (5, 0)--(8,0) node[above,midway]{$\gamma^{-1}$ };
\end{tikzpicture} 
&
\begin{tikzpicture}[scale=.6]
\draw [ draw=gray, fill=gray, opacity=0.2]
       (0,0) -- (3,3) -- (3,-3) -- cycle;
\draw[-] (0,-3) -- (0,3)node[left]{\footnotesize Im};
\draw[-] (-3,0) -- (3,0)node[below]{\footnotesize Re};
\end{tikzpicture}
\end{tabular}

\begin{claim}\label{clm1} $\varphi_t$ satisfies condition \eqref{A} for $\alpha=1$ and $m=0$. That is to say for all $t\in (0,1)$, $\displaystyle{|\varphi^{(h)}_t(z)|\leq |\varphi'_t(0)|}$ for all $z\in \D$. 
\end{claim}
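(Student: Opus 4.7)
The plan is to transfer the problem to the right half plane, where the self-map $\varphi_t$ becomes the simple power $w\mapsto w^t$, and then verify the inequality in polar coordinates.

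First, I would record the factorization $\varphi_t=\gamma^{-1}\circ\psi_t\circ\gamma$, where $\psi_t(w)=w^t$ denotes the principal branch on the right half plane $\mathbb{H}$. Since $\gamma:\D\to\mathbb{H}$ is a conformal isomorphism, it is a hyperbolic isometry: a direct computation using $|\gamma'(z)|=2/|1-z|^2$ together with $2\,\mathrm{Re}\,\gamma(z)=2(1-|z|^2)/|1-z|^2$ yields
\[ \frac{|\gamma'(z)|}{2\,\mathrm{Re}\,\gamma(z)}=\frac{1}{1-|z|^2}. \]
Combining this identity with the chain rule (and the analogous identity for $\gamma^{-1}$) gives the invariance
\[ \frac{(1-|z|^2)\,|\varphi_t'(z)|}{1-|\varphi_t(z)|^2}=\frac{\mathrm{Re}(w)\,|\psi_t'(w)|}{\mathrm{Re}(\psi_t(w))},\qquad w=\gamma(z), \]
so the hyperbolic derivative of $\varphi_t$ on $\D$ equals the half-plane hyperbolic derivative of $\psi_t$ evaluated at $\gamma(z)$.

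Next, I would substitute $\psi_t(w)=w^t$ and write $w=re^{i\theta}$ with $r>0$ and $\theta\in(-\pi/2,\pi/2)$ (the argument of any point in $\mathbb{H}$). Then $|\psi_t'(w)|=t\,r^{t-1}$, $\mathrm{Re}(w)=r\cos\theta$, and $\mathrm{Re}(w^t)=r^t\cos(t\theta)>0$, whence
\[ \frac{\mathrm{Re}(w)\,|\psi_t'(w)|}{\mathrm{Re}(\psi_t(w))}=\frac{r\cos\theta\cdot t\,r^{t-1}}{r^t\cos(t\theta)}=\frac{t\cos\theta}{\cos(t\theta)}. \]
Because $0<t<1$ and $|t\theta|<|\theta|<\pi/2$, the monotonicity and parity of cosine give $\cos(t\theta)\ge\cos\theta$, so this quantity is at most $t$.

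Finally, a brief chain-rule computation at $z=0$ (using $\gamma(0)=1$, $\gamma'(0)=2$, $\psi_t'(1)=t$, and $(\gamma^{-1})'(1)=1/2$) shows $\varphi_t'(0)=t$. Combining the two displays above yields $|\varphi_t^{(h)}(z)|\le t=|\varphi_t'(0)|$, which is the required condition \eqref{A} with $\alpha=1$ and $m=0$. The only step needing care is the hyperbolic invariance identity, but this is a standard consequence of $\gamma$ being a conformal isomorphism between the Poincar\'e disk and half-plane models; the remaining ingredient is just the elementary trigonometric inequality $\cos\theta\le\cos(t\theta)$.
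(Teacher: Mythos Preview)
Your proposal is correct and follows essentially the same route as the paper: both transfer the computation to the right half plane via $w=\gamma(z)$, reduce $|\varphi_t^{(h)}(z)|$ to the expression $t\cos\theta/\cos(t\theta)$ in polar coordinates, and finish with the elementary inequality $\cos(t\theta)\ge\cos\theta$. The only difference is cosmetic: you invoke the hyperbolic-isometry property of $\gamma$ to obtain the half-plane expression in one stroke, whereas the paper carries out the same reduction by direct algebraic manipulation of $|w^t+1|^2-|w^t-1|^2=4r^t\cos(t\theta)$ and the analogous identity for $w$.
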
 
\begin{proof}
Clearly, 
$\varphi_t(0)=0$ and
\[|\varphi'_t(z)| =\displaystyle{\frac{2t~ |\gamma(z)^{t-1}|~|\gamma'(z)|}{\left|\gamma(z)^t+1\right|^2}}.\] Since
$\displaystyle{\gamma'(z)=\frac{2}{(1-z)^2}}$, we see that $|\varphi'_t(0)|=t$.
It is known that image of $\varphi_t$ touches the boundary of the unit disk non tangentially at $1$ and $-1$. Now put $ w = \gamma(z) = re^{i\theta}$, we see that
\begin{align*}
|\varphi^{(h)}_t(z)|= &\frac{1-|z|^2}{1-\left|\frac{w^t-1}{w^t+1}\right|^2} ~\frac{2t~ |w^{t-1}|~|w'|}{\left|w^t+1\right|^2}\\
= &\frac{1-|z|^2}{|w^t+1|^2-|w^t-1|^2} ~2t~ |w^{t-1}|~|w'|.
\end{align*} 
On the other hand, we have
\begin{align*}
|w^t+1|^2-|w^t-1|^2=&(w^t+1)\overline{(w^t+1)}-(w^t-1)\overline{(w^t-1)} \\ = &(w^t+1)(\overline{w}^t+1)-(w^t-1)(\overline{w}^t-1)\\=&2(w^t+\overline{w}^t)\\=&2~r^t(e^{it\theta}+e^{-it\theta})\\=&4~r^t\cos t\theta. 
\end{align*}
And $w'=\gamma'(z)=\dfrac{2}{(1-z)^2}$
\[|\varphi^{(h)}_t(z)|=\displaystyle{\frac{1-|z|^2}{|1-z|^2}~\frac{t~r^{t-1}|e^{i(t-1)\theta}|}{r^t \cos t\theta}}.\] 
Using  $z=\dfrac{w-1}{w+1}$, we get 
\begin{align*}
|\varphi^{(h)}_t(z)|=&\displaystyle{\frac{1-\left|\frac{w-1}{w+1}\right|^2}{\left|1-\frac{w-1}{w+1}\right|^2}~\frac{t~r^{t-1}}{r^t \cos t\theta}}\\
&=\displaystyle{\frac{|w+1|^2-|w-1|^2}{4}~\frac{t~r^{t-1}}{r^t \cos t\theta}}\\
&=\displaystyle{\frac{4~r \cos\theta}{4}~\frac{t~r^{t-1}}{r^t \cos t\theta}}\\
&=\displaystyle{\frac{t \cos\theta}{ \cos t\theta}}.
\end{align*}
If $z\in (-1,1)$ then $\gamma(z)\in \mathbb{R_+}$. Therefore $\theta=0$ and so $|\varphi^{(h)}_t(z)|=t$. On the other hand if $z\in \D\setminus (-1,1)$ then $|\theta| \in (0,\pi/2)$. Hence $\cos t\theta > \cos\theta>0$ and so $|\varphi^{(h)}_t(z)|<t$. This completes the proof.
\end{proof}
\end{example} 

\begin{remark} From the proof of Claim \ref{clm1}, we see that $|\varphi^{(h)}_t(z)|\nrightarrow 0$ as $z$ approaches the boundary of the unit disk along the real-axis. Hence the composition operator with symbol $\varphi_t$ is a non-compact operator on $\bb$.
\end{remark}

The following proposition provides the sufficient condition that ensures the K\"onigs function belongs to Bloch-type spaces. This proposition plays an important role to prove main theorems. 
\begin{proposition}
\label{th13}
Assume $\cphi$ is bounded on $\bb_\alpha$ and $\varphi$ satisfies the condition 
\eqref{A} for some $\alpha>0$ and for some fixed non-negative integer $m$. Then $\sigma$ belongs to $\bb_\alpha$.
\end{proposition}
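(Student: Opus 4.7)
The plan is to control the quantity $F(z) := (1-|z|^2)^\alpha |\sigma'(z)|$ by iterating the Schr\"oder equation. Differentiating $\sigma\circ\varphi = \varphi'(0)\sigma$ and iterating $k$ times gives
\[\sigma'(z) = \varphi'(0)^{-k}\,\sigma'(\varphi_k(z))\,\varphi_k'(z),\]
where $\varphi_k'(z) = \prod_{j=0}^{k-1}\varphi'(\varphi_j(z))$ by the chain rule. Multiplying by $(1-|z|^2)^\alpha$ and inserting the telescoping product $\prod_{j=0}^{k-1}\bigl[(1-|\varphi_j(z)|^2)^\alpha/(1-|\varphi_{j+1}(z)|^2)^\alpha\bigr]$, which equals $(1-|z|^2)^\alpha/(1-|\varphi_k(z)|^2)^\alpha$, reorganizes the expression into the functional identity
\[F(z) \;=\; \frac{F(\varphi_k(z))}{|\varphi'(0)|^k}\,\prod_{j=0}^{k-1}\bigl|\varphi^{(h_\alpha)}(\varphi_j(z))\bigr|.\]

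Next I would deploy condition~\eqref{A}. Because it holds at every point of $\D$, substituting $\varphi_{j-m}(z)$ for $z$ shows $|\varphi^{(h_\alpha)}(\varphi_j(z))| \le |\varphi'(0)|$ for every $j\ge m$. The initial factors with $0\le j < m$ are uniformly controlled by the hypothesis that $\cphi$ is bounded on $\bb_\alpha$: Theorem~\ref{Zh21} applied with $u\equiv 1$ supplies $M := \sup_{z\in\D}|\varphi^{(h_\alpha)}(z)| < \8$. Splitting the product at index $m$, for every $k\ge m$,
\[F(z) \;\le\; \frac{M^m\,|\varphi'(0)|^{k-m}}{|\varphi'(0)|^k}\,F(\varphi_k(z)) \;=\; \frac{M^m}{|\varphi'(0)|^m}\,F(\varphi_k(z)).\]

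To finish, since $\varphi(0)=0$ and $|\varphi'(0)|<1$, the Schwarz lemma (or Denjoy--Wolff) gives $\varphi_k(z)\to 0$ pointwise on $\D$. The identity $\sigma_k'(0) = \varphi_k'(0)/\varphi'(0)^k = 1$ passes to $\sigma'(0) = 1$, so continuity of $F$ at the origin gives $F(\varphi_k(z)) \to 1$. Letting $k\to\8$ in the displayed inequality yields $F(z) \le M^m/|\varphi'(0)|^m$ uniformly in $z$, which is precisely $\sigma \in \bb_\alpha$.

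I expect the main obstacle to be the bookkeeping in the first step: arranging the telescoping so that the chain-rule factors $\varphi'(\varphi_j(z))$ and the weight ratios $(1-|\varphi_j(z)|^2)^\alpha/(1-|\varphi_{j+1}(z)|^2)^\alpha$ combine into exactly $\varphi^{(h_\alpha)}(\varphi_j(z))$. Once the functional identity is in hand, the two hypotheses plug in transparently: condition~\eqref{A} controls the tail of the product, boundedness controls the head, and pointwise Denjoy--Wolff convergence disposes of the residual $F(\varphi_k(z))$ factor.
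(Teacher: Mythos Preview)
Your argument is correct and follows essentially the same route as the paper: both telescope the chain-rule product $\varphi_k'(z)=\prod_{j=0}^{k-1}\varphi'(\varphi_j(z))$ into hyperbolic $\alpha$-derivatives, bound the first $m$ factors by the constant $M$ coming from boundedness of $\cphi$ on $\bb_\alpha$, and bound the remaining factors by $|\varphi'(0)|$ via condition~\eqref{A}. The only cosmetic difference is that the paper works with the approximants $\sigma_k=\varphi_k/\varphi'(0)^k$ and passes to the limit $\sigma_k'\to\sigma'$ (using the trivial bound $(1-|\varphi_k(z)|^2)^\alpha\le 1$ for the residual), whereas you start from the Schr\"oder identity for $\sigma$ itself and dispose of the residual $F(\varphi_k(z))$ via $\varphi_k(z)\to 0$ and $F(0)=|\sigma'(0)|=1$.
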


\begin{proof} Since $\cphi$ is bounded on $\bb_\alpha$,
 there exists positive number $M$ such that \begin{equation} \label{compth1}(1-|z|^2)^\alpha|\varphi'(z)| \leq M (1-|\varphi(z)|^2)^\alpha,\hspace{.5cm}~\text{for}~ z\in\D.\end{equation} 
For $m$ given by the assumption, choose non-negative integer $k$ such that $k>m$. For $z\in\D$, we have
\begin{align*}
(1-|z|^2)^\alpha|\varphi_k'(z)| 
 =&(1-|z|^2)^\alpha ~|\varphi'(\varphi_{k-1}(z))\varphi'(\varphi_{k-2}(z))...\varphi'(\varphi_{m-1}(z))\varphi'(\varphi_m(z))...\varphi'(z)| \\
  = & (1-|z|^2)^\alpha~|\varphi'(z)\varphi'(\varphi(z))~...\varphi'(\varphi_{m-1}(z))\varphi'(\varphi_m(z))...\varphi'(\varphi_{k-2}(z))~\varphi'(\varphi_{k-1}(z)|.
 \end{align*}
 By using \eqref{compth1},
 \begin{align*}
(1-|z|^2)^\alpha|\varphi_k'(z)| \leq & M(1-|\varphi(z)|^2)^\alpha~|\varphi'(\varphi(z))~...\varphi'(\varphi_{m-1}(z))\varphi'(\varphi_m(z))...\varphi'(\varphi_{k-2}(z))~\varphi'(\varphi_{k-1}(z)|.
\end{align*}
Again using \eqref{compth1} repeatedly, we get
\begin{align*}
(1-|z|^2)^\alpha|\varphi_k'(z)| \leq & M^m~(1-|\varphi_{m}(z)|^2)^\alpha |\varphi'(\varphi_{m}(z))...\varphi'(\varphi_{k-1}(z)| \end{align*}
Now using the condition \eqref{A} repeatedly, we get 
\begin{align*}
(1-|z|^2)^\alpha|\varphi_k'(z)| \leq & M^m |\varphi'(0)^{k-m}| ~(1-|\varphi_k(z)|^2)^\alpha.
  \end{align*}
  Thus, \[\lim_{k \rightarrow \8}(1-|z|^2)^\alpha~\left| \frac{\varphi'_k(z)}{\varphi'(0)^{k}}\right|    \leq \frac{M^m}{|\varphi'(0)^m|}\overline{\lim}_{k\rightarrow\8}~(1-|\varphi_k(z)|^2)^\alpha \leq \dfrac{M^m}{|\varphi'(0)^m|} . \]
This implies that 
 $  (1-|z|^2)^\alpha|\sigma'(z)|\leq \dfrac{M^m}{|\varphi'(0)^m|}$. Hence, $\sigma\in\bb_\alpha.$
\end{proof}
The following corollary provides a sufficient condition that ensures all the integer powers of the K\"onigs function belong to Bloch-type spaces $\bb_\alpha$ for $\alpha<1$.
\begin{theorem} \label{co15}
Suppose $\alpha <1$. If $\cphi$ is bounded on $\bb_\alpha$ and $\varphi$ satisfies the condition \eqref{A}, then $ \sigma^n\in\bb_\alpha$ for all positive integers n.
\end{theorem}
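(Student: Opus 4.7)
The plan is to reduce the statement about $\sigma^n$ to the case $n=1$, which is exactly Proposition \ref{th13}, and then exploit the feature specific to the range $\alpha<1$: the embedding $\bb_\alpha\subset H^\8$ noted right after \eqref{1.04}. Under the hypotheses of the theorem, Proposition \ref{th13} already gives $\sigma\in\bb_\alpha$, and from $\alpha<1$ one gets $\sigma\in H^\8$ for free, so $\|\sigma\|_\8<\8$.

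With boundedness of $\sigma$ in hand, the chain rule is all that is needed. I would write
\[
(\sigma^n)'(z)=n\,\sigma(z)^{n-1}\sigma'(z),
\]
and then estimate
\[
(1-|z|^2)^\alpha|(\sigma^n)'(z)|=n\,|\sigma(z)|^{n-1}(1-|z|^2)^\alpha|\sigma'(z)|\le n\,\|\sigma\|_\8^{\,n-1}\sup_{w\in\D}(1-|w|^2)^\alpha|\sigma'(w)|.
\]
The right-hand side is finite by Proposition \ref{th13}, so taking the supremum over $z\in\D$ shows $\sigma^n\in\bb_\alpha$ for every positive integer $n$, with a quantitative bound on its Bloch-type seminorm of order $n\|\sigma\|_\8^{\,n-1}\|\sigma\|_{\bb_\alpha}$.

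There is essentially no obstacle here because Proposition \ref{th13} has done the heavy lifting and the $\alpha<1$ hypothesis forces $\sigma$ to be a bounded symbol. The only point worth flagging is precisely why this short route fails in the remaining cases: for $\alpha\ge 1$ one can no longer conclude that the K\"onigs function is bounded, and so an $n$-th power can blow up faster than $(1-|z|^2)^{-\alpha}$ on the way to the boundary. That is why separate arguments (Theorems \ref{thm2222} and \ref{thm3.3}) are required for $\alpha=1$ and $\alpha>1$, but for the theorem at hand the one-line chain-rule estimate suffices.
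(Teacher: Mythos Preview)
Your proof is correct and follows essentially the same route as the paper: invoke Proposition~\ref{th13} to get $\sigma\in\bb_\alpha$, use the embedding $\bb_\alpha\subset H^\infty$ for $\alpha<1$ to bound $\|\sigma\|_\infty$, and then apply the chain rule to control $(1-|z|^2)^\alpha|(\sigma^n)'(z)|$ by $n\,\|\sigma\|_\infty^{n-1}\|\sigma\|_{\bb_\alpha}$. Your closing remark about why the argument breaks down for $\alpha\ge 1$ is also on point.
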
 
\begin{proof}
From Proposition $\ref{th13}$, we see that $\sigma \in \bb_\alpha$. Suppose $\mathbb{H}^\8$ denotes the space of bounded holomorphic functions on the unit disk $\D$. Since $ \bb_\alpha \subset \mathbb{H}^\8$ for $\alpha <1$, so there exists a positive constant $C$   such that $\|\sigma\|_{\mathbb{H}^\8} \leq C.$  
 \begin{align*}
 (1-|z|^2)^\alpha |(\sigma^n(z))'| =&(1-|z|^2)^\alpha~|n~\sigma^{n-1}(z)~\sigma'(z)|\\  \leq &\|\sigma\|_{\bb_\alpha}~n~|\sigma^{n-1}(z)|  \\ \leq & n~\|\sigma\|_{\bb_\alpha}~  C^{n-1}.
\end{align*} 
Hence,  $\sigma^n \in \bb_{\alpha }$ for all positive integers $n$.
\end{proof}

The following theorem gives a sufficient condition that ensures all the integer powers of K\"onigs function belong to the Bloch space.
\begin{theorem}\label{thm2222}
Suppose $\varphi$ is a holomorphic self map of $\D$, $\varphi(0)=0$, $0<|\varphi'(0)|<1$. Also, assume that \begin{equation}
\label{eq12}
 \dfrac{1-|z|^2}{1-|\varphi(z)|^2}\dfrac{ \log \frac{2}{1-|z|}}{\log\frac{2}{1-|\varphi(z)|}} |\varphi'(z)|\leq |\varphi'(0)|,\hspace{.5cm}\text{for~all~} z\in \D.
\end{equation}  Then  $\cphi$ is bounded on $\bb$ and 
 $\sigma^n\in\bb$ for all positive integers $n$.
\end{theorem}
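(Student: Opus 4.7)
The plan is to follow the telescoping template of Proposition \ref{th13}, but with an extra logarithmic weight tailored to the additional $\log$-factor in hypothesis \eqref{eq12}.

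First, to see that $\cphi$ is bounded on $\bb$ I would apply Theorem \ref{Zh21}(2) with $u\equiv 1$: condition (a) is automatic, and condition (b) demands $\sup_z(1-|z|^2)|\varphi'(z)|/(1-|\varphi(z)|^2)<\8$. Since $\varphi(0)=0$ forces $|\varphi(z)|\leq|z|$ by Schwarz's lemma, we have $\log\frac{2}{1-|\varphi(z)|}\leq\log\frac{2}{1-|z|}$, so \eqref{eq12} at once yields the required bound by $|\varphi'(0)|$.

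Next, introduce $F(z)=(1-|z|^2)\log\frac{2}{1-|z|}$, so that \eqref{eq12} rewrites as $F(z)|\varphi'(z)|\leq|\varphi'(0)|\,F(\varphi(z))$. Applying this inequality at each iterate $\varphi_j(z)$ and multiplying over $0\leq j<k$ gives the telescoped bound $F(z)|\varphi_k'(z)|\leq|\varphi'(0)|^k F(\varphi_k(z))$, exactly as in the proof of Proposition \ref{th13}. Dividing by $|\varphi'(0)|^k$ and sending $k\to\8$ (using $\varphi_k(z)\to 0$ since $0$ is the Denjoy--Wolff fixed point, and $F(0)=\log 2$) produces the refined estimate
\[
(1-|z|^2)\log\tfrac{2}{1-|z|}\,|\sigma'(z)|\leq\log 2,\qquad z\in\D.
\]
In particular $\sigma\in\bb$, but an additional factor $1/\log\frac{2}{1-|z|}$ is saved over the generic Bloch scaling.

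To promote this to $\sigma^n\in\bb$ I would invoke the product rule $(\sigma^n)'=n\sigma^{n-1}\sigma'$. Since $\sigma(0)=0$ (the iterates $\varphi_k(0)$ all vanish), integrating the refined estimate along the radius from $0$ to $z$, using $1-t^2\geq 1-t$ and the substitution $s=\log\frac{2}{1-t}$, yields
\[
|\sigma(z)|\;\leq\;\log 2\cdot\log\log\tfrac{2}{1-|z|}+C
\]
for some absolute constant $C$. Therefore
\[
(1-|z|^2)|(\sigma^n)'(z)|\;\leq\; n|\sigma(z)|^{n-1}\cdot\frac{\log 2}{\log\frac{2}{1-|z|}}\;=\;O\!\left(\frac{[\log\log\frac{2}{1-|z|}]^{\,n-1}}{\log\frac{2}{1-|z|}}\right),
\]
which is bounded on $\D$ because $(\log L)^{n-1}/L\to 0$ as $L\to\8$. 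This gives $\sigma^n\in\bb$ for every positive integer $n$.

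The main obstacle, I expect, is recognizing that the bare conclusion $\sigma\in\bb$ is insufficient: its standard consequence $|\sigma(z)|\lesssim\log\frac{2}{1-|z|}$, inserted into the product rule, would leave $[\log\frac{2}{1-|z|}]^{n-2}$ unabsorbed for $n\geq 3$. The $\log$-weight in \eqref{eq12} is engineered precisely so that the telescoping step saves an additional $1/\log\frac{2}{1-|z|}$ on $|\sigma'|$, which in turn converts the radial integration of $|\sigma'|$ into a mere $\log\log$-growth on $|\sigma|$; this is slow enough that the decaying factor $1/\log\frac{2}{1-|z|}$ can swallow every power $|\sigma|^{n-1}$ uniformly on $\D$.
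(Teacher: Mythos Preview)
Your proposal is correct and follows essentially the same route as the paper: telescope the hypothesis along iterates to obtain the weighted bound $(1-|z|^2)\log\frac{2}{1-|z|}\,|\sigma'(z)|\leq C$, integrate radially to get $|\sigma(z)|\lesssim\log\log\frac{2}{1-|z|}$, and then feed both estimates into $(\sigma^n)'=n\sigma^{n-1}\sigma'$. The only cosmetic differences are that the paper cites Schwarz--Pick directly for boundedness and bounds $F(\varphi_k(z))$ uniformly by $4$ (via $\log x\le x$) rather than passing to the limit $F(\varphi_k(z))\to F(0)=\log 2$, yielding slightly different constants.
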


\begin{proof}
Boundedness of $\cphi$ on the Bloch space is consequence of Schwarz-Pick theorem. From the hypothesis of the theorem, we have
\begin{equation} \label{Blocheq}
(1-|z|^2)\log \frac{2}{1-|z|} |\varphi'(z)|\leq |\varphi'(0)| (1-|\varphi(z)|^2) \log\frac{2}{1-|\varphi(z)|},\hspace{.5cm}\text{for~all~} z\in \D.
\end{equation}
Suppose $k$ be a positive integer, then
\begin{align*}
(1-|z|^2)|\varphi_k'(z)| \log \frac{2}{1-|z|}
=&(1-|z|^2)|\varphi'(z)\varphi'(\varphi(z))......\varphi'(\varphi_{k-1}(z))|\log\frac{2}{1-|z|}\\
= &(1-|z|^2)\log\frac{2}{1-|z|}|\varphi'(z)\varphi'(\varphi(z))......\varphi'(\varphi_{k-1}(z))|.
\end{align*}
By using \eqref{Blocheq}, we see that
\begin{align*}
(1-|z|^2)|\varphi_k'(z)| \log \frac{2}{1-|z|} \leq &  |\varphi'(0)|(1-|\varphi(z)|^2)\log\frac{2}{1-|\varphi(z)|}|\varphi'(\varphi(z))......\varphi'(\varphi_{k-1}(z)|.
\end{align*}
And using \eqref{Blocheq} repeatedly, we get
\begin{align*}
(1-|z|^2)|\varphi_k'(z)| \log \frac{2}{1-|z|}
= & |\varphi'(0)|^k ( 1-|\varphi_k(z)|^2)\log\frac{2}{1-|\varphi_k(z)|}\\
\leq & 2 |\varphi'(0)|^k ( 1-|\varphi_k(z)|)\log\frac{2}{1-|\varphi_k(z)|}.\end{align*}
 \text{Since}~ $\log x\leq x ~\text{for}~ x>1$,
 \begin{align*}
 (1-|z|^2)|\varphi_k'(z)| \log \frac{2}{1-|z|}
\leq & 4 |\varphi'(0)|^k. 
\end{align*}
Hence,
\[\lim _{k\rightarrow\8}(1-|z|^2)\left|\dfrac{\varphi_k'(z)}{\varphi'(0)^k}\right| \log \dfrac{2}{1-|z|}=(1-|z|^2) |\sigma'(z)| \log \dfrac{2}{1-|z|} \leq 4, \hspace{.5cm}  z\in\D\] which shows that \begin{equation} \label{blocheq1}|\sigma'(z)|\leq \dfrac{4}{(1-|z|^2)  \log \dfrac{2}{1-|z|} }.\end{equation}
Recall that $\sigma(0)=0$. Now let us get an estimate for $\sigma$. 
\begin{align}\label{sigma1}
|\sigma(z)| = & \left| \int_o^1 \sigma'(tz) d(tz)\right|\nonumber\\
 \leq & \int_o^1  |\sigma'(tz)d(t|z|)\nonumber\\
\leq & \int_0^1 \dfrac{4}{\log \dfrac{2}{1-|tz| }} \dfrac{1}{1-|tz|^2} d(t|z|)\nonumber \\
 \leq&\;4~\left[\log\left(\log \dfrac{2}{1-t|z|}\right)\right]^1_0 \nonumber \\
 = & \;4 ~\left[\log \left(\log \dfrac{2}{1-|z|}\right)-\log(\log 2)\right].\end{align}
Now by using \eqref{blocheq1} and the estimate above for $\sigma$, we get  
\begin{align*}
(1-|z|^2)(\sigma^n(z))' =&(1-|z|^2) ~n~|\sigma^{n-1}(z)~\sigma'(z)| \\
\leq & \displaystyle{ 4^{n} n \left( \log \log \frac{2}{1-|z|}-\log\log 2 \right)^{n-1}\;\frac{1}{\log\frac{2}{1-|z|}}}.
\end{align*}
 Taking limit $|z|\rightarrow1$, it is easy to see that the right hand side of the last expression goes to zero. Hence  $\sigma^n\in  \bb$ for all positive integers $n$.

\end{proof}
 Let us recall the Lipschitz-type norm which is equivalent to the usual norm defined for function $f\in \bb_\alpha$, $\alpha >1$: 
\[\|f\|_{\bb_\alpha} \equiv \sup_{z\in\D} (1-|z|^2)^{\alpha-1}|f(z)|.\]
Next, we present results for the Bloch-type spaces, $\bb_\alpha$ for $\alpha>1$. Let us start with the following definition.

\begin{definition}
Suppose $f\in \bb_\alpha$ for some $\alpha>0$, then we define the {\it Bloch number} of $f$ by 
  $b_f ~=\displaystyle{\inf_{\alpha}~ \{\alpha: f\in \bb_\alpha\}}$.
\end{definition}

\begin{proposition}
\label{th113}
Suppose $\beta>0$. Then $f^n\in\bb_{\beta+1}$ for all positive integers $n$ if and only if $b_f$  is at most 1.
\end{proposition}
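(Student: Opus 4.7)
The plan is to exploit the Lipschitz-type equivalent norm stated in \eqref{PRD1}: for any $\alpha>1$, membership of a holomorphic function $g$ in $\bb_\alpha$ is equivalent to the pointwise growth estimate $\sup_{z\in\D}(1-|z|^2)^{\alpha-1}|g(z)|<\infty$. This converts Bloch-type membership of $f^n$ into a pointwise bound on $|f|^n$, which can then be manipulated by taking $n$-th roots or $n$-th powers. Since $\beta+1>1$ and $\beta/n+1>1$ for every $n\geq 1$, the Lipschitz-type equivalence is available on both sides of the bi-implication.

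For the forward direction, assume $f^n\in\bb_{\beta+1}$ for every positive integer $n$. Using \eqref{PRD1} with $\alpha=\beta+1$, there exists $C_n<\infty$ with
\[
(1-|z|^2)^\beta |f(z)|^n\leq C_n, \qquad z\in\D,
\]
so taking $n$-th roots yields $|f(z)|\leq C_n^{1/n}(1-|z|^2)^{-\beta/n}$. Now fix any $\alpha>1$ and choose a positive integer $n$ with $n\geq\beta/(\alpha-1)$, i.e.\ $\alpha-1-\beta/n\geq 0$. Multiplying the previous inequality by $(1-|z|^2)^{\alpha-1}$ gives
\[
(1-|z|^2)^{\alpha-1}|f(z)|\leq C_n^{1/n}(1-|z|^2)^{\alpha-1-\beta/n}\leq C_n^{1/n},
\]
so by \eqref{PRD1} again, $f\in\bb_\alpha$. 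As $\alpha>1$ was arbitrary, $b_f\leq\alpha$ for every $\alpha>1$, hence $b_f\leq 1$.

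For the reverse direction, assume $b_f\leq 1$ and fix $n\geq 1$. Then $\beta/n+1>1\geq b_f$, and since $\bb_{\alpha'}\subset\bb_\gamma$ whenever $\alpha'\leq\gamma$, the definition of the infimum $b_f$ forces $f\in\bb_{\beta/n+1}$. Applying \eqref{PRD1} with $\alpha=\beta/n+1$ gives a constant $M_n<\infty$ with $(1-|z|^2)^{\beta/n}|f(z)|\leq M_n$ for $z\in\D$. Raising this bound to the $n$-th power produces
\[
(1-|z|^2)^{\beta}|f(z)|^n\leq M_n^n, \qquad z\in\D,
\]
which, via the Lipschitz-type equivalence for $\bb_{\beta+1}$, means $f^n\in\bb_{\beta+1}$.

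There is no real obstacle here; the entire argument is a clean two-way transfer through the Lipschitz-type norm equivalence and the monotonicity of the Bloch-type scale. The only subtle point worth stating carefully is that $b_f\leq 1$ does not assert $f\in\bb_1$ itself, but it does force $f\in\bb_\gamma$ for every $\gamma>1$ (in particular for $\gamma=\beta/n+1$), which is exactly what the reverse direction needs.
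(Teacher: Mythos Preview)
Your argument is correct. The forward direction is essentially the paper's argument recast as a direct proof rather than a contradiction: where the paper picks $n_0$ with $1+\beta/n_0<b_f$ and raises the Lipschitz bound to the $n_0$-th power to blow up $\|f^{n_0}\|_{\bb_{\beta+1}}$, you take the $n$-th root of the same bound and let $n\to\infty$.

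The converse, however, is genuinely different. The paper argues that $b_f\le 1$ implies $f\in\bb$, then uses the logarithmic growth estimate $|f(z)|\lesssim\|f\|_\bb\log\frac{1}{1-|z|}$ together with the \emph{derivative} definition of the $\bb_{\beta+1}$-norm to show $(1-|z|^2)^{\beta+1}|(f^n)'(z)|\to 0$. Your route stays entirely within the Lipschitz-type norm \eqref{PRD1}: from $b_f\le 1<1+\beta/n$ you conclude $f\in\bb_{1+\beta/n}$ and simply raise the resulting bound to the $n$-th power. This is more symmetric with your forward direction and, as you explicitly note, it avoids the delicate edge case $b_f=1$ with $f\notin\bb_1$, where the paper's inference ``$b_f\le 1\Rightarrow f\in\bb$'' is not automatic (the infimum defining $b_f$ need not be attained). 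Your version handles that case without modification.
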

\begin{proof}
Suppose $f^n \in \bb_{\beta +1}$ for all positive integers $n$.
Need to show $b_f\leq 1$. On the contrary assume $b_f>1$. Then there exists a positive integer $n_0$ such that  $1< 1+\frac{\beta}{n_0} < b_f$.
 Now in the view of the Lipschitz-type norm, we see that for any fixed positive integer $M$ there exists $z\in\D$ such that
 \begin{align*}
M\leq (1-|z|^2)^{\beta/n_0}|f (z)| \leq & \{(1-|z|^2)^{\beta/n_0}|f (z)|\}^{n_0}\\
=& (1-|z|^2)^{\beta}|f (z)|^{n_0}, 
\end{align*}
 which shows that
  \[M\leq\sup_{z\in\D} (1-|z|^2)^{\beta}|f (z)|^{n_0}=\|f^{n_0}\|_{\bb_{\beta+1}}.\]
 
Since $M$ is an arbitrary positive integer, $f^{n_0} \notin \bb_{\beta+1}$. Which is a contradiction. 

Conversely, suppose $b_f\leq 1$. Since $\bb_\alpha\subset \bb$ for all $\alpha \leq 1$, then clearly $f\in\bb$. For any fixed $\beta >0$ and for any fixed positive integer $n$,
\begin{align*}
(1-|z|^2)^{\beta+1}|(f^n)'(z)|
= & (1-|z|^2)^{\beta+1}|n f^{n-1}(z) f'(z)|\\
= & n (1-|z|^2) |f'(z)| (1-|z|^2)^{\beta} | f^{n-1}(z)|\\
 \leq   & n \|f\|_{\bb} (1-|z|^2)^{\beta} \left(\|f\|_{\bb} \log\frac{1}{1-|z|}\right)^{n-1}\\
= & n (\|f\|_{\bb})^n (1-|z|^2)^{\beta} \left( \log\frac{1}{1-|z|}\right)^{n-1}.
\end{align*}
The last expression goes to zero as $|z| \rightarrow 1$.
 This shows that $f^n\in\bb_{\beta+1}$ for all positive integers $n$.
\end{proof}
\begin{theorem} \label{thm3.3}
Suppose $\varphi$ is a holomorphic self map of $\D$, $\varphi(0)=0$, $0<|\varphi'(0)|<1$, and also assume $\alpha>1$. If $|\varphi^{(h)}(z)|\leq |\varphi'(0)|$ for all  $z\in \D $ then $\cphi$ is bounded on $\bb_\alpha$ and $\sigma^n\in\bb_\alpha$ for all positive integers $n$
\end{theorem}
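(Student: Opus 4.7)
The plan is to deduce this theorem from Propositions \ref{th13} and \ref{th113} after establishing the boundedness claim on $\bb_\alpha$ by a direct appeal to Theorem \ref{Zh21}.

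For boundedness of $\cphi$ on $\bb_\alpha$, I would apply Theorem \ref{Zh21}(3) with $u\equiv 1$. The condition involving $u'$ holds trivially, so matters reduce to verifying
\[
\sup_{z\in\D}\frac{(1-|z|^2)^\alpha |\varphi'(z)|}{(1-|\varphi(z)|^2)^\alpha}<\8.
\]
The expression inside the supremum factors as $|\varphi^{(h)}(z)|\left(\frac{1-|z|^2}{1-|\varphi(z)|^2}\right)^{\alpha-1}$. The hypothesis bounds the first factor by $|\varphi'(0)|$; Schwarz's lemma (available because $\varphi(0)=0$) gives $|\varphi(z)|\leq|z|$, hence $1-|\varphi(z)|^2\geq 1-|z|^2$, and together with $\alpha-1>0$ this forces the second factor to lie in $(0,1]$. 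Thus the supremum is at most $|\varphi'(0)|$ and $\cphi$ is bounded on $\bb_\alpha$.

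For the eigenfunction claim, I would observe that the hypothesis is precisely condition \eqref{A} with $\alpha=1$ and $m=0$. Since Schwarz--Pick supplies $|\varphi^{(h)}(z)|\leq 1$ automatically, Theorem \ref{Zh21}(2) with $u\equiv 1$ already shows $\cphi$ is bounded on $\bb$. Proposition \ref{th13} applied in the case $\alpha=1$, $m=0$ then places $\sigma$ in $\bb$, so the Bloch number satisfies $b_\sigma\leq 1$. Invoking Proposition \ref{th113} with $\beta=\alpha-1>0$ delivers $\sigma^n\in\bb_{\beta+1}=\bb_\alpha$ for every positive integer $n$.

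The only mildly delicate maneuver is the passage from the ordinary hyperbolic derivative to the hyperbolic $\alpha$-derivative in the first step; this hinges on the Schwarz-lemma inequality $1-|\varphi(z)|^2\geq 1-|z|^2$ coming from $\varphi(0)=0$. Once that observation is in hand, the rest is a clean assembly of the machinery developed earlier in the section, with no further iterative estimates on $\varphi_k$ required.
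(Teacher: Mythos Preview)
Your proposal is correct and follows essentially the same route as the paper: apply Proposition~\ref{th13} with $\alpha=1$ to get $\sigma\in\bb$, hence $b_\sigma\le 1$, and then invoke Proposition~\ref{th113} with $\beta=\alpha-1$. The paper's proof actually omits any justification of the boundedness assertion on $\bb_\alpha$, so your Schwarz-lemma factorization argument fills in a detail the authors left implicit.
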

\begin{proof}
Since $|\varphi^{(h)}(z)|\leq |\varphi'(0)|$ for all  $z\in \D $,  from Proposition $\ref{th13}$ $\sigma \in \bb$. So $b_f\leq 1$. Therefore the result follows from Proposition $\ref{th113}$.
\end{proof}


\section{Weighted Composition operator}\label{wtcompopt}

Let us recall that if $ u $ is a holomorphic function of the unit disk, and $\varphi$ is a holomorphic self map of the unit disk then the Schr\"oder equation for weighted composition operator is given by
\begin{equation}
\label{eq 21}
 u(z)f(\varphi(z)) =\lambda f(z),
 \end{equation}
 where $f\in\h$ and $\lambda$ is a complex constant.
 
Let us also recall that if $u(0)\neq 0, \varphi(0)=0, 0<|\varphi'(0)|<1$ then the solutions of \eqref{eq 21} are given by  Theorem \ref{th22}. The principal eigenfunction corresponding to the eigenvalue $u(0)$ is denoted by $v$ and all other eigenfunctions are of the form $v\sigma^n$  where $\sigma$ is the K\"onigs function of $\varphi$ and $n$ is a positive integer. Hosokawa and Nguyen \cite{Ho10} studied equation \eqref{eq 21} in the Bloch space and obtained the following result. 
\begin{theorem} 
\label{th23}
Assume $\varphi$ is a holomorphic self map of $\D$ with $\varphi(0)=0$ and $0<|\varphi'(0)|<1$,  and $u$ is holomorphic map of $\D$ such that $u(0)\neq 0$. Let us also assume that $u\cphi$ is bounded on $\bb$. For $0 < r < 1$, set
\[M_r(\varphi):=\sup_{|z|=r}|\varphi(z)|\hspace{1cm}a_r:= \sup_{|z|=r} \big(|u '(z)\varphi(z)| + |u(z)\varphi'(z)|\big).\]Suppose that
\begin{enumerate}
\item[(i)] $\lim_{r\rightarrow 1} \log(1 - r) \log M_r(\varphi) = \8.$
\item[(ii)] $\log |a_r| < \epsilon \log(1 -r) \log M_r(\varphi),$ \\where $\epsilon>0$ is a constant satisfying $\epsilon  \log \Vert \varphi\Vert_\8> -1.$
\end{enumerate}
Then, $v\sigma^n\in\bb$ for all non-negative integer $n$. 
\end{theorem}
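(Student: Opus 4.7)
The strategy is to exploit the $k$-fold iterate of the functional equation for $v\sigma^n$ supplied by Theorem \ref{th22}:
\[(v\sigma^n)(z)=\frac{P_k(z)}{\mu^k}\,(v\sigma^n)(\varphi_k(z)),\qquad P_k(z):=\prod_{j=0}^{k-1}u(\varphi_j(z)),\ \ \mu:=u(0)\,\varphi'(0)^n.\]
Since $\varphi(0)=0$ and $0<|\varphi'(0)|<1$, the Denjoy--Wolff theorem forces $\varphi_k(z)\to 0$ for every $z\in\D$, so as $k\to\8$ the factor $(v\sigma^n)(\varphi_k(z))$ is eventually trapped in a compact subset of $\D$ on which both the function and its derivative are uniformly bounded. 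The task reduces to a quantitative estimate on $P_k$, $P_k'$ and $\varphi_k'$ along the orbit.

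The first step is to differentiate the iterated identity to obtain
\[(v\sigma^n)'(z)=\frac{P_k'(z)}{\mu^k}\,(v\sigma^n)(\varphi_k(z))+\frac{P_k(z)\varphi_k'(z)}{\mu^k}\,(v\sigma^n)'(\varphi_k(z)),\]
and to compute the logarithmic derivative
\[\frac{P_k'(z)}{P_k(z)}=\sum_{i=0}^{k-1}\frac{u'(\varphi_i(z))}{u(\varphi_i(z))}\prod_{\ell=0}^{i-1}\varphi'(\varphi_\ell(z)).\]
The defining inequalities $|u'(w)\varphi(w)|\le a_{|w|}$ and $|u(w)\varphi'(w)|\le a_{|w|}$ built into the hypothesis translate each of these contributions into an expression involving $a_{r_j}$, where $r_j:=|\varphi_j(z)|$.

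The second step is to control the descent of the radii $\{r_j\}$. Because $r_{j+1}\le M_{r_j}(\varphi)$ and $\log(1-r)\log M_r(\varphi)\to\8$ by (i), the sequence drops into any prescribed compact subset of $\D$ after $O\!\bigl(\log\tfrac{1}{1-|z|}\bigr)$ iterations; along the initial stretch, (ii) rewrites as $a_{r_j}\le (1-r_j)^{\epsilon\log M_{r_j}(\varphi)}$, and beyond it all factors are absolutely bounded. Assembling these estimates and multiplying by $(1-|z|^2)$ produces a majorant whose partial sums telescope into a geometric series in $|\varphi'(0)|$.

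The main obstacle is the quantitative matching enforced by the threshold $\epsilon\log\|\varphi\|_\8>-1$: this inequality is precisely what keeps the exponent $\epsilon\log M_{r_j}(\varphi)$---which contributes boundary growth $(1-r_j)^{-c}$ with $0<c<1$---strictly dominated by the decay $|\varphi'(0)|^{-nk}$ in $\mu^{-k}$. Once uniform control in $z$ and $k$ is secured, letting $k\to\8$ kills the tail involving $\varphi_k'(z)(v\sigma^n)'(\varphi_k(z))$ via the K\"onigs-style identity $\varphi_k'(z)=\prod_{\ell<k}\varphi'(\varphi_\ell(z))$ matched against $\varphi'(0)^{nk}$, and the residual estimate on $P_k'/\mu^k$ bounds $(1-|z|^2)|(v\sigma^n)'(z)|$ uniformly on $\D$, giving $v\sigma^n\in\bb$.
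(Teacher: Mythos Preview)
The paper does not supply a proof of this theorem. Theorem~\ref{th23} is quoted from Hosokawa and Nguyen \cite{Ho10} as background: the paragraph introducing it reads ``Hosokawa and Nguyen \cite{Ho10} studied equation \eqref{eq 21} in the Bloch space and obtained the following result,'' and the paper then moves on to its own contributions (Theorems~4.2--4.4) without giving any argument for Theorem~\ref{th23}. So there is no in-paper proof to compare your proposal against.

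As for the proposal itself: the overall architecture---iterate the eigen-equation $k$ times, differentiate, split into a $P_k'$ term and a $P_k\varphi_k'$ term, then use (i) to control how many steps it takes the orbit to fall into a fixed compact set and (ii) to bound the accumulated growth of $a_{r_j}$ along the way---is the natural line of attack and is essentially the one taken in \cite{Ho10}. However, several steps in your sketch are assertions rather than arguments. The claim that the orbit enters a fixed compact set after $O\bigl(\log\tfrac{1}{1-|z|}\bigr)$ iterations does not follow from (i) alone in the form stated; you need to extract a quantitative decay of $1-r_{j+1}$ in terms of $1-r_j$ from $\log(1-r)\log M_r(\varphi)\to\8$, and this requires some care. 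Likewise, the sentence ``assembling these estimates \dots\ produces a majorant whose partial sums telescope into a geometric series in $|\varphi'(0)|$'' hides the entire computation: the condition $\epsilon\log\|\varphi\|_\8>-1$ has to be invoked at a specific point to ensure the exponent on $(1-r_j)$ stays strictly below~$1$, and you have not shown where. If you intend to write this up, those two places are where the real work lies.
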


Here we investigate the properties of weight $u$ and symbol $\varphi$ of weighted composition operators $u\cphi$ that ensure $v\sigma^n$ belongs to Bloch-type spaces $\bb_\alpha$ for some $\alpha>0$ and for all non negative integer $n$. Let us begin with following remark.
\begin{remark} \label{rem1}
Suppose $f$ is a holomorphic function defined on $\D.$ If $\|f'\|_\8<M$ for some $M>0$ then
\begin{align*}
|f(z)-f(0)|=& \left|\int_0^1z f'(tz)dt\right| \\
\leq & \int_0^1|z~ f'(tz)|dt \\
\leq  &~ M\int_0^1|z|dt
\end{align*}
If $f$ also satisfies $f(0)=0$, then $\|f\|_\8\leq M$. 

\end{remark}

\begin{proposition} Assume $\varphi$ is a univalent holomorphic self map of the unit disk with $\varphi(0)=0$ and $0<|\varphi'(0)|<1$, and $\sigma$ is K\"onigs function of $\varphi$. Then, $\sigma$ is bounded if and only if there is a positive integer $k$ such that $\|\varphi_k\|_\8<1$. 
\end{proposition}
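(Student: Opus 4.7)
The plan is to use the fundamental intertwining relation $\sigma\circ\varphi_k=\varphi'(0)^k\sigma$, which follows by iterating the Schr\"oder equation $\sigma\circ\varphi=\varphi'(0)\sigma$ from Theorem \ref{KOTH}, together with univalence of $\sigma$. The univalence of $\sigma$ follows from the univalence of $\varphi$: each iterate $\varphi_k$ is univalent, so each $\sigma_k=\varphi_k/\varphi'(0)^k$ is univalent, and by Hurwitz's theorem the nonconstant locally uniform limit $\sigma$ is either univalent or constant; since $\sigma$ is nonconstant by K\"onigs's theorem, it is univalent.

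For the easy direction, assume $\|\varphi_k\|_\8=:\rho<1$ for some $k$. Then $\varphi_k$ maps $\D$ into the compact set $\overline{\D_\rho}\subset\D$, on which $\sigma$ (being holomorphic on all of $\D$) is bounded. From $\sigma=\varphi'(0)^{-k}\,\sigma\circ\varphi_k$ it follows immediately that $\sigma\in H^\infty(\D)$.

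For the converse, assume $\|\sigma\|_\8=M<\infty$, and let $\Omega:=\sigma(\D)$, which is a bounded open set containing $0$ since $\sigma(0)=0$. Choose $r>0$ small enough that the closed disk $\overline{\D_r}$ is contained in $\Omega$; this is possible because $\Omega$ is open and contains the origin. Because $\sigma:\D\to\Omega$ is a biholomorphism (univalence gives injectivity, and it is a surjection onto $\Omega$ by definition), $\sigma^{-1}(\overline{\D_r})$ is a compact subset of $\D$, hence contained in some $\overline{\D_\rho}$ with $\rho<1$. Now pick $k$ so large that $|\varphi'(0)|^k M\le r$; then $\sigma(\varphi_k(\D))=\varphi'(0)^k\sigma(\D)\subset\overline{\D_r}$, so applying $\sigma^{-1}$ yields $\varphi_k(\D)\subset\overline{\D_\rho}$, i.e., $\|\varphi_k\|_\8\le\rho<1$.

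The only real issue to be careful about is justifying the use of $\sigma^{-1}$, which requires the univalence of $\sigma$ established via Hurwitz's theorem from the univalence hypothesis on $\varphi$; everything else is a clean bookkeeping application of the Schr\"oder intertwining together with the observation that $0\in\sigma(\D)$ is an interior point.
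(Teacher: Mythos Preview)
Your proof is correct and follows the same overall strategy as the paper: both use the iterated Schr\"oder relation $\sigma\circ\varphi_k=\varphi'(0)^k\sigma$ together with the univalence of $\sigma$ inherited from $\varphi$. The implication $\|\varphi_k\|_\infty<1\Rightarrow\sigma$ bounded is handled identically.

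The one difference is in the converse. The paper simply invokes \cite{Sh93} for the fact that a bounded univalent K\"onigs map forces some iterate $\varphi_k$ to have $\|\varphi_k\|_\infty<1$, whereas you supply a clean self-contained argument: choose a closed disk $\overline{\D_r}\subset\sigma(\D)$, use univalence to get $\sigma^{-1}(\overline{\D_r})$ compact in $\D$, and then take $k$ large enough that $|\varphi'(0)|^k\|\sigma\|_\infty\le r$ so that $\varphi_k(\D)\subset\sigma^{-1}(\overline{\D_r})$. Your justification of univalence via Hurwitz is also explicit where the paper again just cites \cite{Sh93}. So your version is slightly more informative, but there is no genuine difference in approach.
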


\begin{proof}
Suppose $\sigma$ is bounded. Since $\varphi$ is univalent, $\sigma$ is also univalent (see \cite{Sh93}, page 91). Since $\sigma$ is bounded univalent map, there is a positive integer $k$ such that $\|\varphi_k\|_\8<1$ (see \cite{Sh93}).\\
Conversely suppose there is a positive integer $k$ such that $\|\varphi_k\|_\8<1$. Since we have $\sigma(\varphi(z))=\varphi'(0) \sigma(z)$, 
\begin{align*}
\sigma(\varphi_k(z))= & \sigma(\varphi(\varphi_{k-1}(z))\\=& \varphi'(0) \sigma(\varphi_{k-1}(z))\\=&\varphi'(0)^k \sigma(z).
\end{align*} Clearly left hand side is bounded and therefore $\sigma$ is also bounded, which completes the proof.
\end{proof}

\begin{theorem}
Assume $\varphi$ is a univalent holomorphic self map of the unit disk with $\varphi(0)=0$ and $0<|\varphi'(0)|<1$ satisfying  $|\varphi^{(h_\alpha)}(z)|\leq |\varphi'(0)|$ for all $z\in \D $ and for some fixed $\alpha<1$. If $u$ is holomorphic map of $\D$ such that $u(0)\neq 0$ and $\|u'\|_\8<\8$ then $u\cphi$ is bounded on $\bb_\alpha$ and $v\sigma^n\in\bb_\alpha$ for all non-negative integers $n$. 
\end{theorem}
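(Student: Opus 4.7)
The plan is to first verify that $u\cphi$ is bounded on $\bb_\alpha$, then establish $v\in\bb_\alpha$ directly, and finally conclude $v\sigma^n\in\bb_\alpha$ via the product rule. Boundedness will follow from Theorem~\ref{Zh21}(1): applying Remark~\ref{rem1} to $u-u(0)$ gives $|u(z)|\leq|u(0)|+\|u'\|_\8$ on $\D$, and since $(1-|z|^2)^\alpha\leq 1$ this also puts $u\in\bb_\alpha$; the second supremum in Theorem~\ref{Zh21}(1) equals $\sup_z|u(z)|\cdot|\varphi^{(h_\alpha)}(z)|$, finite by the hyperbolic-derivative hypothesis. Proposition~\ref{th13} (with $m=0$) then yields $\sigma\in\bb_\alpha$, and the inclusion $\bb_\alpha\subset H^\8$ for $\alpha<1$ gives $\|\sigma\|_\8<\8$.

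The technical heart is a uniform geometric decay of the iterates. Exactly as in the proof of Proposition~\ref{th13}, the hypothesis yields $(1-|z|^2)^\alpha|\varphi_k'(z)|\leq|\varphi'(0)|^k$ for every $z\in\D$ and $k\geq 1$. Integrating along the segment from $0$ to $z$ (using $\varphi_k(0)=0$), and using that $\int_0^1(1-u^2)^{-\alpha}\,du$ is finite precisely because $\alpha<1$, I would obtain
\[|\varphi_k(z)|\leq C_\alpha|\varphi'(0)|^k,\qquad z\in\D,\]
with $C_\alpha$ depending only on $\alpha$. With this in hand I bound $v$ on $\D$ via the product representation $v(z)=\prod_{j=0}^{\8}u(\varphi_j(z))/u(0)$: choose $j_0$ so that $C_\alpha|\varphi'(0)|^{j_0}<|u(0)|/(2\|u'\|_\8)$, so that for $j\geq j_0$ Remark~\ref{rem1} forces $u(\varphi_j(z))/u(0)$ into a small disk around $1$ and the tail $\sum_{j\geq j_0}|\log(u(\varphi_j(z))/u(0))|$ is dominated by a convergent geometric series in $|\varphi'(0)|^j$. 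The initial $j_0$ factors are each bounded by $1+\|u'\|_\8/|u(0)|$, giving $\|v\|_\8<\8$; the same argument yields $\sup_{j,z}|v_j(z)|<\8$.

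For $v\in\bb_\alpha$, I iterate the differentiated Schr\"oder identity $v'(z)=(u'(z)/u(0))v(\varphi(z))+(u(z)/u(0))\varphi'(z)v'(\varphi(z))$ to obtain
\[v'(z)=\sum_{j=0}^{k-1}v_j(z)\,\varphi_j'(z)\,\frac{u'(\varphi_j(z))}{u(0)}\,v(\varphi_{j+1}(z))\;+\;v_k(z)\,\varphi_k'(z)\,v'(\varphi_k(z)),\]
where the coefficient $v_j(z)\varphi_j'(z)$ arises from combining $v_j(z)=\prod_{i<j}u(\varphi_i(z))/u(0)$ with the chain rule $\varphi_j'(z)=\prod_{i<j}\varphi'(\varphi_i(z))$. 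Multiplying by $(1-|z|^2)^\alpha$ and using $(1-|z|^2)^\alpha|\varphi_j'(z)|\leq|\varphi'(0)|^j$, the $j$th summand is dominated by $(\|u'\|_\8/|u(0)|)\,\|v_j\|_\8\,\|v\|_\8\,|\varphi'(0)|^j$, which sums to a bound independent of $z$; the remainder $(1-|z|^2)^\alpha v_k(z)\varphi_k'(z)v'(\varphi_k(z))$ is at most $\|v_k\|_\8\,|\varphi'(0)|^k\,|v'(\varphi_k(z))|$, which vanishes as $k\to\8$ for each fixed $z$ since $\varphi_k(z)\to 0$ keeps $v'(\varphi_k(z))$ bounded. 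Hence $v\in\bb_\alpha$, and since both $v$ and $\sigma$ lie in $\bb_\alpha\cap H^\8$, the product rule immediately gives $v\sigma^n\in\bb_\alpha$ for every $n\geq 0$.

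The main obstacle is establishing the uniform decay $|\varphi_k(z)|\leq C_\alpha|\varphi'(0)|^k$: every subsequent step---boundedness of $v$ and of $\sup_j\|v_j\|_\8$, absolute convergence of the series for $v'$, and vanishing of the tail term---hinges on it, and this is precisely where the restriction $\alpha<1$ enters through the integrability of $(1-u^2)^{-\alpha}$ near $u=1$.
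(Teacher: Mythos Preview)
Your argument is correct, and the ``main obstacle'' you flag is in fact routine: the bound $(1-|z|^2)^\alpha|\varphi_k'(z)|\le|\varphi'(0)|^k$ (the $m=0$ case of Proposition~\ref{th13}) together with the substitution $s=t|z|$ gives
\[
|\varphi_k(z)|\le\int_0^{|z|}\frac{|\varphi'(0)|^k}{(1-s^2)^\alpha}\,ds\le|\varphi'(0)|^k\int_0^1\frac{ds}{(1-s^2)^\alpha}=C_\alpha|\varphi'(0)|^k,
\]
finite exactly because $\alpha<1$. Everything downstream (uniform bound on $v$ and on $\sup_j\|v_j\|_\8$, geometric convergence of the series for $(1-|z|^2)^\alpha v'(z)$, vanishing of the tail for each fixed $z$, and the product rule for $v\sigma^n$) then goes through as you describe.

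The paper's route is genuinely different. It does not exploit the quantitative decay of $\|\varphi_k\|_\8$; instead it uses the \emph{univalence} hypothesis: $\sigma\in\bb_\alpha\subset H^\8$ and $\sigma$ univalent force $\|\varphi_k\|_\8<1$ for some fixed $k$ (via Proposition~4.1). From there the paper runs a \emph{finite} backward induction on the Schr\"oder equation itself, composed with $\varphi_{k-1},\varphi_{k-2},\dots$, to pull the $\bb_\alpha$ bound on an arbitrary eigenfunction $f$ down from $f\circ\varphi_{k-1}$ to $f$ in $k$ steps. Your approach trades that structural fact for an explicit infinite-series representation of $v'$ and a direct estimate. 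A notable dividend of your method is that it never uses univalence: the decay $\|\varphi_k\|_\8\le C_\alpha|\varphi'(0)|^k$ follows from the hyperbolic-derivative hypothesis alone, so your proof actually establishes a stronger statement than the one in the paper. The paper's argument, on the other hand, treats every eigenfunction $f$ uniformly without having to isolate $v$ and then multiply by $\sigma^n$.
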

\begin{proof} Since $\|u\|_\infty<\|u'\|_\infty+|u(0)| <\infty$ and $|\varphi^{(h_\alpha)}(z)|\leq |\varphi'(0)|$, $u\cphi$ is bounded on $\bb_\alpha$  for some $\alpha<1$.

Since $|\varphi^{(h_\alpha)}(z)|\leq |\varphi'(0)|$ for some $\alpha<1$, using Proposition \ref{th13}, we see that $\sigma\in\bb_\alpha$, $\alpha<1$ and hence bounded. Since $\varphi$ is univalent, $\sigma$ is univalent. Consequently, there exists a non-negative integer $k$ such that $\|\varphi_k\|_\8<1$.
Composing $\varphi_{k-1} $ on both sides of the Schr\"oder equation \eqref{eq 21} from right,
\begin{equation} \label{alpha 1}
 u(\varphi_{k-1}(z))f(\varphi_{k}(z)) =\lambda f(\varphi_{k-1}(z)). 
\end{equation}
The left hand side in the equation above is bounded and so is $ f\circ \phi_{k-1}$. Now differentiating both side of \eqref{alpha 1}, we get that
\[ u'(\varphi_{k-1}(z))~\varphi'_{k-1}(z)~ f(\varphi_{k}(z)) + u(\varphi_{k-1}(z)) ~f'(\varphi_k(z))~\varphi'_k(z) 
=\lambda f'(\varphi_{k-1}(z))~\varphi'_{k-1}(z).\]
Multiplying both sides by $(1-|z|^2)^\alpha$ and using boundedness of 

 $\|u'\|_\8$, $\|u\|_\8$, $ f\circ \varphi_k$ and $f'\circ \varphi_k$, we see that there exists a constant $M$ such that
\begin{equation}\label{alpha 2}
(1-|z|^2)^\alpha|\lambda f'(\varphi_{k-1}(z))\varphi'_{k-1}(z)|\leq M(1-|z|^2)^\alpha\big(|\varphi'_{k-1}(z)|  +| \varphi'_k(z)|\big).\end{equation}
Right hand side of the above equation is uniformly bounded and therefore the left hand side is bounded.  
Again, let us compose $\varphi_{k-2}$ on \eqref{eq 21}, to get
\[ u(\varphi_{k-2}(z))f(\varphi_{k-1}(z)) =\lambda f(\varphi_{k-2}(z)).\]
Let us differentiate above expression and then multiply by $(1-|z|^2)^\alpha$ on both sides. Then, the use of \eqref{alpha 1} and \eqref{alpha 2} shows that $(1-|z|^2)^\alpha| f'(\varphi_{k-2}(z))\varphi'_{k-2}(z)|$ is bounded. 

Continuing this process, we see that that $ \sup_{z\in\D}(1-|z|^2)^\alpha |f'(z)|$ is bounded and hence  $f\in\bb_\alpha$. From Theorem \ref{th22}, we know that any holomorphic $f$ satisfying \eqref{eq 21} is of the form $v\sigma^n$ for some positive integer $n$, so $v\sigma^n\in \bb_\alpha$ for all non negative integers $n$. This completes the proof.

\end{proof}

  The following two theorems give us the sufficient conditions that ensure $v\sigma^n$ belong to Bloch-type spaces $\bb_\alpha$ for some $\alpha>1$ and for all non-negative integers $n$. 
\begin{theorem}
Let $\varphi$ be a holomorphic self map of the unit disk with $\varphi(0)=0$ and $0<|\varphi'(0)|<1$,  and $u$ is holomorphic map of $\D$ such that $u(0)\neq 0$. Let $\beta$ be a fixed positive number and assume   \[|u(z)|\frac{(1-|z|^2)^{\beta}}{(1-|\varphi(z)|^2)^{\beta}}\leq |u(0)|,\hspace{.5cm}\text{for~all}~z\in\D.\]  Then the following statements are true. 

\renewcommand\theenumi{\roman{enumi}}
\begin{enumerate}
\item If  $\displaystyle{ |\varphi^{(h_\alpha)}(z)|\leq |\varphi'(0)|}$ for all $z\in\D$ and for some $\alpha<1$, then $v\sigma^n\in \bb_{\beta+1}$ for all non-negative integers $n$.

\item If $ |\varphi^{(h)}(z)|\leq |\varphi'(0)|$ for all $z\in\D$, then $v\sigma^n\in \bb_{p+1}$, for some $p>\beta$ and for all non-negative integers $n$.
\end{enumerate}

\end{theorem}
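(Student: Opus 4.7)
The hypothesis on $u$ is designed so that it telescopes along the forward orbit of $\varphi$, in perfect parallel with the way condition \eqref{A} telescopes for $\varphi$ itself in the proof of Proposition \ref{th13}. My plan is therefore to first establish the growth bound $(1-|z|^2)^\beta|v(z)|\leq 1$ --- which, via the Lipschitz-type norm equivalence \eqref{PRD1}, places $v$ in $\bb_{\beta+1}$ --- and then to multiply by $\sigma^n$, using Proposition \ref{th13} to control $\sigma$ in the two cases.

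\textbf{Step 1: membership of $v$.} Evaluating the hypothesis at the iterate $\varphi_j(z)$ and rearranging gives
\[\left|\frac{u(\varphi_j(z))}{u(0)}\right|\leq\frac{(1-|\varphi_{j+1}(z)|^2)^\beta}{(1-|\varphi_j(z)|^2)^\beta}.\]
Multiplying from $j=0$ to $j=k-1$ in the formula for $v_k$ from Theorem \ref{th22}(i), the right-hand side telescopes and yields $(1-|z|^2)^\beta|v_k(z)|\leq(1-|\varphi_k(z)|^2)^\beta\leq 1$. Passing to the limit $k\to\infty$ using Theorem \ref{th22}(i) gives $(1-|z|^2)^\beta|v(z)|\leq 1$ for all $z\in\D$, and hence by \eqref{PRD1} we obtain $v\in\bb_{\beta+1}$.

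\textbf{Step 2: multiplying by $\sigma^n$.} In part (i) the assumption is condition \eqref{A} with $m=0$, and the same inequality makes $\cphi$ bounded on $\bb_\alpha$ by Theorem \ref{Zh21}(1); Proposition \ref{th13} then yields $\sigma\in\bb_\alpha\subset H^\infty$ since $\alpha<1$. Multiplying the Step~1 bound by $\|\sigma\|_\infty^n$ produces $v\sigma^n\in\bb_{\beta+1}$. In part (ii), Schwarz--Pick gives $\cphi$ bounded on $\bb$, so Proposition \ref{th13} with $\alpha=1$ yields $\sigma\in\bb$, and integrating the Bloch bound on $\sigma'$ against $(1-t^2)^{-1}$ produces the standard logarithmic estimate $|\sigma(z)|\lesssim \log\frac{2}{1-|z|}$. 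For any $p>\beta$,
\[(1-|z|^2)^p|v(z)\sigma^n(z)|\lesssim(1-|z|^2)^{p-\beta}\left(\log\frac{2}{1-|z|}\right)^n,\]
and this remains bounded on $\D$ because any positive power of $(1-|z|^2)$ dominates any power of the logarithm near the boundary. Hence $v\sigma^n\in\bb_{p+1}$.

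\textbf{Main obstacle.} No individual step is technically difficult; the crux is recognizing that the tailored hypothesis on $u$ is \emph{built} to telescope against the iterate factors $(1-|\varphi_j(z)|^2)^\beta$ just as condition \eqref{A} does. The only genuinely new wrinkle is the enlargement $\beta\to p$ in part (ii): since Bloch functions admit only logarithmic growth, a little of the weight $(1-|z|^2)^\beta$ coming from $v$ must be spent to absorb $\left(\log\frac{2}{1-|z|}\right)^n$, and this loss is precisely what prevents us from landing in $\bb_{\beta+1}$.
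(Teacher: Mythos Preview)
Your proposal is correct and follows essentially the same route as the paper: telescope the hypothesis on $u$ along the orbit to get $(1-|z|^2)^\beta|v(z)|\leq 1$, invoke Proposition~\ref{th13} to control $\sigma$ (bounded in case (i), logarithmic growth in case (ii)), and conclude via the Lipschitz-type norm \eqref{PRD1}. If anything, your write-up is slightly more careful than the paper's in explicitly verifying the boundedness hypothesis of Proposition~\ref{th13} before applying it.
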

\begin{proof}
\renewcommand\theenumi{\roman{enumi}}
\begin{enumerate}
\item 
From definition of $v_k$ (see Theorem \ref{th22}), we have

\begin{align*}
(1-|z|^2)^{\beta} |v_k(z)|=& \displaystyle{(1-|z|^2)^{\beta}\frac{|u(z)u(\varphi(z))......u(\varphi_{k-1}(z))|}{|u(0)|^k}} \\ \leq & (1-|\varphi(z)|^2)^{\beta}\frac{|u(\varphi(z))......u(\varphi_{k-1}(z))|}{|u(0)|^{k-1}}\\ &...\\\leq & 1.
\end{align*}
Hence $ (1-|z|^2)^{\beta} |v(z)|=\lim_{k\rightarrow \8} (1-|z|^2)^{\beta} |v_k(z)|\leq 1.$ Since $z$ is arbitrary,  
\[\sup_{z\in\D}(1-|z|^2)^{\beta} |v(z)|<\8.\] On the other hand the assumption   $\displaystyle{ |\varphi^{(h_\alpha)}(z)|\leq |\varphi'(0)|}$ and Proposition \ref{th13} implies that $ \sigma^n\in\bb_\alpha\subset\mathbb{H}^\8$ for all non-negative integer $n$. Hence, \[\sup_{z\in\D}(1-|z|^2)^{\beta} |v(z) \sigma^n(z)|<\8\] for all non-negative integers $n$. Considering the equivalent norm (see \eqref{PRD1}), we see that $v\sigma^n\in\bb_{\beta+1}$ for all non-negative integers $n$.
 
\item From the proof of $(i)$, we see that \begin{equation} \label{WCE10}
\sup_{z\in\D}(1-|z|^2)^{\beta} |v(z)|<\8.
\end{equation}  On the other hand, since $ |\varphi^{(h)}(z)|\leq |\varphi'(0)|$, Proposition \ref{th13} implies that $ \sigma\in \bb$ so there exists $M>0$ such that \begin{equation}
\label{WCE20}
|\sigma(z)|\leq M\log\frac{2}{1-|z|^2}.\end{equation} Now using equations \eqref{WCE10} and \eqref{WCE20}, we have \begin{align*}
(1-|z|^2)^{p} |v(z) \sigma^n(z)|= & \{(1-|z|^2)^\beta |v(z|)\}\{(1-|z|^2)^{p-\beta} |\sigma^n(z)|\} \\ \leq& C M (1-|z|^2)^{p-\beta}\left(\log\frac{2}{1-|z|^2}\right)^n 
\end{align*}  for some constant $C>0$. Now taking limit $|z|\rightarrow 1$, we get that the last expression goes to zero. Hence, $v\sigma^n\in \bb_{p+1}$ for all non-negative integers $n$.
\end{enumerate}

\end{proof}

\begin{theorem}
Let $\varphi$ be a holomorphic self map of the unit disk with $\varphi(0)=0$ and $0<|\varphi'(0)|<1$, and $u$ is holomorphic map of $\D$ such that $u(0)\neq 0$. Suppose that $\beta$ is a positive integer and 
 \renewcommand\theenumi{\roman{enumi}}
\begin{enumerate}
\item
 $\displaystyle{|u(z)|\frac{(1-|z|^2)^{\beta}}{(1-|\varphi(z)|^2)^{\beta}} \frac{\log  \frac{2}{(1-|z|)^\beta}}{\log \frac{2}{(1-|\varphi(z)|)^\beta}}\leq |u(0)|} ,\hspace{.5cm} \text{for~ all~} z\in\D$
\item 
$\displaystyle{|\varphi^{(h)}(z)| \frac{\log  \frac{2}{1-|z|}}{\log \frac{2}{1-|\varphi(z)|}}\leq |\varphi'(0)|},\hspace{.5cm} \text{for~ all~} z\in\D.$
\end{enumerate} 
Then $ v\sigma^n\in\bb_{\beta+1}$ for all non-negative integers $n$.
\end{theorem}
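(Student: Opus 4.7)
The plan is to exploit the Lipschitz-type equivalent norm \eqref{PRD1}: since $\beta+1>1$, membership $v\sigma^n\in\bb_{\beta+1}$ reduces to showing
\[
\sup_{z\in\D}(1-|z|^2)^\beta|v(z)\sigma^n(z)|<\infty.
\]
I would therefore derive separate growth estimates for $v$ and $\sigma$ out of hypotheses (i) and (ii), and then verify that their combined growth is absorbed by the weight $(1-|z|^2)^\beta$.

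For the bound on $v$, I would imitate the iterative device used in Theorem \ref{thm2222}. Starting from the partial products $v_k(z)=u(z)u(\varphi(z))\cdots u(\varphi_{k-1}(z))/u(0)^k$ of Theorem \ref{th22}, I multiply by the composite weight $(1-|z|^2)^\beta\log\frac{2}{(1-|z|)^\beta}$ and apply (i) successively: each application replaces one factor $|u(\varphi_j(z))|$ by $|u(0)|$ and shifts the weight from $\varphi_j(z)$ to $\varphi_{j+1}(z)$. After $k$ such applications the product telescopes to
\[
(1-|z|^2)^\beta\log\frac{2}{(1-|z|)^\beta}\,|v_k(z)|\;\le\;(1-|\varphi_k(z)|^2)^\beta\log\frac{2}{(1-|\varphi_k(z)|)^\beta}.
\]
Since $v_k\to v$ and $\varphi_k(z)\to 0$ (the Denjoy--Wolff point, implicit in Theorem \ref{KOTH}), letting $k\to\infty$ yields
\[
(1-|z|^2)^\beta|v(z)|\;\le\;\frac{\log 2}{\log\frac{2}{(1-|z|)^\beta}},\qquad z\in\D.
\]

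For $\sigma$, observe that condition (ii) is, via the definition of $\varphi^{(h)}$, precisely hypothesis \eqref{eq12} of Theorem \ref{thm2222}; its proof therefore supplies $|\sigma'(z)|\le 4/((1-|z|^2)\log\frac{2}{1-|z|})$, which integrates (as in \eqref{sigma1}) to $|\sigma(z)|\le C\log\log\frac{2}{1-|z|}$ for some constant $C>0$ (holding for $|z|$ bounded away from $0$; holomorphicity handles the compact interior). Combining the two estimates,
\[
(1-|z|^2)^\beta|v(z)\sigma^n(z)|\;\le\;\frac{(\log 2)\,C^n\bigl(\log\log\frac{2}{1-|z|}\bigr)^n}{\log\frac{2}{(1-|z|)^\beta}},
\]
and as $|z|\to 1$ the denominator grows like $\beta\log\frac{1}{1-|z|}$, which dominates every power of the iterated logarithm in the numerator. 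Hence the expression is bounded on $\D$ (and in fact vanishes at the boundary), and \eqref{PRD1} concludes $v\sigma^n\in\bb_{\beta+1}$.

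The main obstacle is engineering the telescoping step tightly: one must identify the correct composite weight $(1-|\cdot|^2)^\beta\log\frac{2}{(1-|\cdot|)^\beta}$ so that hypothesis (i) transfers it exactly from one iterate to the next, producing a clean bound on $v$ rather than an accumulating constant. Once that estimate is isolated in this form, the $\sigma$-estimate imported from Theorem \ref{thm2222} and the final iterated-log versus log comparison are routine.
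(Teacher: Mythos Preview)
Your proposal is correct and follows essentially the same route as the paper: telescope the composite weight $(1-|\cdot|^2)^\beta\log\frac{2}{(1-|\cdot|)^\beta}$ through the partial products $v_k$ via hypothesis (i), import the iterated-log bound on $\sigma$ from Theorem~\ref{thm2222} via hypothesis (ii), and finish with the Lipschitz-type norm~\eqref{PRD1}. The only cosmetic difference is that after the telescoping you pass to the limit using $\varphi_k(z)\to 0$ to obtain the constant $\log 2$, whereas the paper bounds the right-hand side uniformly by $2^{\beta+1}$ via the crude inequality $\log x\le x$; both devices give an admissible bound on $(1-|z|^2)^\beta|v(z)|$ and lead to the same conclusion.
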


\begin{proof} From definition of $v_k$ on \ref{th22}) and the condition $(i)$, we have
 \begin{align*}
(1-|z|^2)^{\beta} \log  \frac{2}{(1-|z|)^\beta} |v_k(z)|= &\displaystyle{(1-|z|^2)^{\beta}\log  \frac{2}{(1-|z|)^\beta}\frac{|u(z)u(\varphi(z))......u(\varphi_{k-1}(z))|}{|u(0)|^k}}\\ \leq &\displaystyle{(1-|\varphi(z)|^2)^{\beta}\log  \frac{2}{(1-|\varphi(z)|)^\beta}\frac{|u(\varphi(z))......u(\varphi_{k-1}(z))|}{|u(0)|^{k-1}}}\\ \leq& (1-|\varphi_k(z)|^2)^{\beta}\log\frac{2}{(1-|\varphi_k(z)|)^\beta}\\\leq& 2^\beta(1-|\varphi_k(z)|)^{\beta} \log\frac{2}{(1-|\varphi_k(z)|)^\beta}.
\end{align*}

\text{Since} ~~$ \log x\leq x,~~ \text{for}~~ x>1$

\[(1-|z|^2)^{\beta} \log  \frac{2}{(1-|z|)^\beta} |v_k(z)|\leq  2^{\beta+1}.\]

So taking limit $k$ approaches to $\8$, we see that \begin{equation}
\label{WCE30}
 (1-|z|^2)^{\beta}|v(z)|\leq \displaystyle{\frac{2^{\beta+1}}{\log  \frac{2}{1-|z|}}}.\end{equation}
On the other hand, since $\varphi$ satisfies condition $(ii)$, equation \eqref{sigma1} of Theorem \ref{thm2222} says that there exists $K>0$ such that \begin{equation}
\label{WCE40} |\sigma(z)| \leq  K \displaystyle{\log\log  \frac{2}{1-|z|}}.\end{equation}
Now using \eqref{WCE30} and \eqref{WCE40}, we get \[(1-|z|^2)^{\beta}|v(z)\sigma^n(z)|\leq \displaystyle{\frac{2^{\beta+1}K^n}{\log  \frac{2}{1-|z|}}  \left(\log\log  \frac{2}{1-|z|}\right)^n}.\] Clearly right hand side of the above equation goes to 0 as $|z|\rightarrow 1 $. Using the norm defined on \eqref{PRD1}, $v\sigma^n\in\bb_{\beta+1}$ for all non negative integers $n$.

\end{proof}

 This paper is based on a research which forms a part of the author's Ph.D. dissertation from University of Toledo. The author wishes to express his deep gratitude to his dissertation adviser Professor \v{Z}eljko \v{C}u\v{c}kovi\'c.
 
 \small

\begin{center}

Bhupendra Paudyal\\
Central State University\\
Math and Computer Sc. Dept.\\ Wilberforce, Ohio, USA\\

{\it Email: bpaudyal@centralstate.edu}

\end{center}

\end{document}